\newtheorem{theorem}{Theorem}[section]
\newtheorem{corollary}[theorem]{Corollary}
\newtheorem{lemma}[theorem]{Lemma}
\newtheorem{proposition}[theorem]{Proposition}
\theoremstyle{definition}
\newtheorem{definition}[theorem]{Definition}
\newtheorem{example}[theorem]{Example}
\newtheorem{remark}[theorem]{Remark}
\numberwithin{equation}{section}
\newcommand{\B}{\mathbb B}
\newcommand{\R}{\mathbb R}
\newcommand{\N}{\mathbb N}
\newcommand{\X}{\mathbb X}
\newcommand{\nullv}{{\bf 0}}
\newcommand{\inte}{{\rm int}\,}
\newcommand{\dom}{{\rm dom}\,}
\newcommand{\grph}{{\rm grph}\,}
\newcommand{\fsubdif}{\widehat{\partial}}
\newcommand{\Coder}{\widehat{\rm D}^*}
\newcommand{\Normal}{\widehat{\rm N}}
\newcommand{\stslp}[1]{|\nabla_p #1|}
\newcommand{\sostslp}[1]{\overline{|\nabla_p #1|}{}^>}
\newcommand{\dispo}[1]{|[#1]|}
\newcommand{\ball}[2]{{\rm B}(#1, #2)}
\newcommand{\dist}[2]{{\rm dist}\left(#1,#2\right)}
\newcommand{\usreg}[2]{{\rm u.hreg}(#1, #2)}
\newcommand{\uLlsc}[2]{{\rm u.liplsc}(#1, #2)}
\newcommand{\disp}[3]{|[#1]|(#2,#3)}
\newcommand{\pcalm}[3]{{\rm clm}(#1,#2,#3)}
\title[An implicit multifunction theorem for uniform hemiregularity]
{An implicit multifunction theorem for the hemiregularity
of mappings with application to constrained optimization}
\author[A. Uderzo]{A. Uderzo}
\address[A. Uderzo]{Dept. of Mathematics and Applications,
University of Milano-Bicocca}
\email{{\tt amos.uderzo@unimib.it}}
\keywords{Hemiregularity, constrained optimization, exact penalization,
problem calmness, constraint parameterization, strict outer slope,
implicit multifunction theorem, Fr\'echet subdifferential.}
\subjclass[2010]{49J52, 49J53, 90C30, 90C31, 90C48}
\begin{document}

\begin{abstract}
The present paper contains some investigations about
a uniform variant of the notion of metric hemiregularity,
the latter being
a less explored property obtained by weakening metric regularity.
The introduction of such a quantitative stability property
for set-valued mappings is motivated by applications to the
penalization of constrained optimization problems, through
the notion of problem calmness. As a main result, an implicit
multifunction theorem for parameterized inclusion problems
is established, which measures the uniform hemiregularity
of the related solution mapping in terms of problem data.
A consequence on the exactness of penalty functions is discussed.
\end{abstract}

\maketitle

\begin{flushright}
to S.M. Robinson, on the occasion of his 75th birthday
\end{flushright}


\section{Introduction}    \label{Sect:1}

The key idea ispiring the Lagrangian approach to constrained
optimization is to avoid to determine all elements in the feasible
region of a given problem: in fact, solving explicitly a nonlinear equation
system is typically a task too hard to be undertaken. Sometimes,
it is even superfluous to do it, as far as local optimality is concerned.
Instead, the approach proposes to formulate optimality
conditions by filling the lack of information about the feasible
region with the usage of implicit function theorems. Thus,
after such an approach, implicit function theorems became a
crucial tool for the constraint system analysis. Historically,
constrained optimization acted as a driving force for the development
of theorems of this kind.
For instance, the celebrated Lyusternik's theorem, one of the earliest
implicit function theorems formulated in abstract spaces, which had a
remarkable impact on modern variational analysis, was established
exactly with this aim (see \cite{Lyus34}).
That said, it is not surprising that the evolution of optimization
conditioned the investigations about implicit function theorems.
Essentially, two main facts contributed to shape the evolution
process of optimization, stimulated by theoretical and applicational
reasons: an increasing complexity of constraint systems and the
appearance of nonsmoothness in problem data. Their effect, both in
formalizing and solving the resulting optimization problems, was
that equations were replaced by more general relations called
generalized equations, where set-valued mappings played a fundamental
role. In order to devise extensions of the Lagrangian approach
suitable to the new context, implicit function theorems had to be
adequated. Such a direction of research was soon clearly understood,
among the others, by S.M. Robinson, who introduced the term
``generalized equation" and provided seminal contributions to
the theory coming up around this issue (see \cite{DonRoc14,Robi79,Robi80}).
In the large variety of forms taken by the new generation of implicit
function theorems that arose with the help of techniques from
variational analysis, some common elements can be still recognized:
instead of classical functions, they speak of multifunctions,
which emerge as a solution mapping of a parameterized generalized
equation; instead of differentiability, they establish some kind
of Lipschitzian behaviour of the implicitly defined multifunctions,
along with related quantitative estimates. Both these features
seem to be rather natural within the new context. In
particular, notice that differentiability of a mapping can
be viewed as a local calmness property of the error resulting
from affine approximation of it. Moreover, what is important,
they allow to treat effectively a broad spectrum of constraint
systems.
In the impossibility of providing a comprehensive updated
account of all relevant achievements about this theme, the reader
is referred to \cite{BorZhu05,DonRoc14,Mord06,RocWet98,Schi07}
and the bibliographies therein.

The investigations exposed in the present paper proceed
along the aforementioned direction of research. In particular,
they focus on a property of uniform metric hemiregularity for the
solution mapping associated with a parameterized generalized
equation, whose interest is motivated by applications to penalty
methods in constrained optimization. This property for set-valued
mappings can be obtained as a weak variant of the more
studied and widely employed property known as metric regularity,
which describes a local Lipschitzian behaviour of multifunctions.
Even though it made its first appearance in its inverse formulation
as Lipschitz lower semicontinuity already in \cite{KlaKum02}, only
recently was explicitly formulated and investigated under
different names
\footnote{To avoid confusion with another property having
the same name (see \cite[Definition 10.6.1 (b)]{Schi07}),
instead of ``semiregularity",
which was used in \cite{Krug09,KruTha15}, in the present
paper the term ``hemiregularity", borrowed from \cite{AraMor11},
is adopted.}
(see \cite{AraMor11,DonRoc14,Krug09,KruTha15}).

The contents of the paper are arranged as follows.
In Section \ref{Sect:2}, the basic definitions are introduced,
several equivalent reformulations of uniform hemiregularity are
provided, along with some examples of uniform  hemiregular mappings.
This multiple description should help to catch connections with
similar properties and then to better understand the main phenomenon
under consideration.
In Section \ref{Sect:3} a motivation for introducing uniform
hemiregularity, coming from constrained optimization, is discussed
in detail.
Section \ref{Sect:4} contains the main result of the paper, that is
an implicit multifunction theorem. It provides a sufficient condition
for the solution mapping, associated with a parameterized inclusion
problem, to be uniformly hemiregular at a given point of its graph,
along with an estimate of the uniform hemiregularity modulus of it.
Such a result is established in a purely metric setting, by
means of a variational technique largely employed in this field
(see, for instance, \cite{BorZhu05}). Its impact on constrained
optimization in terms of conditions for the exactness of penalty
functions and relationships with the existent literature on
the subject is then discussed. A specialization of the main result
to the Asplund space setting, involving Fr\'echet coderivatives,
is also presented.

Throughout the paper the use of the basic notations is standard.
Whenever $(P,d)$ denotes a metric space, given $\bar p\in P$ and
$r\ge 0$, $\ball{\bar p}{r}=\{p\in P:\ d(p,\bar p)\le r\}$ indicates
the closed ball centred at $\bar p$ with radius $r$. In the same
setting, if $S\subseteq P$, $\dist{\bar p}{S}=\inf_{p\in S}d(\bar p,p)$ stands
for the distance of $\bar p$ from $S$, with the convention that
$\dist{\bar p}{\varnothing}=+\infty$. By $\ball{S}{r}=\{p\in P:\
\dist{p}{S}\le r\}$ the $r$-enlargement of $S$ is denoted.
By $\inte S$ the topological interior of $S$ is denoted.
Whenever $\Theta:P\rightrightarrows X$ is a set-valued mapping,
$\grph\Theta$ and $\dom\Theta$ denote the graph and the domain
of $\Theta$, respectively.
Unless otherwise indicated, all set-valued mappings will be
assumed to take closed values.
Throughout the text, the acronyms l.s.c. and u.s.c. stand for
lower semicontinuous and upper semicontinuous, respectively.
Further special notations will be introduced contextually to
their use.

\vskip1cm


\section{Uniform hemiregularity and related notions}
\label{Sect:2}

The main property under study is introduced in the following
definition.

\begin{definition}      \label{def:usreg}
Let $\Theta:P\rightrightarrows X$ be a set-valued mapping between
metric spaces and let $(\bar p,\bar x)\in\grph\Theta$. $\Theta$
is called:
\begin{itemize}

\item[(i)] ({\it metrically}) {\it hemiregular at $(\bar p,\bar x)$} if
there exist positive constants $\kappa$ and $r$ such that
$$
    \dist{\bar p}{\Theta^{-1}(x)}\le\kappa d(x,\bar x),\quad\forall
   x\in\ball{\bar x}{r};
$$

\item[(ii)] {\it uniformly} ({\it metrically}) {\it hemiregular
at $(\bar p,\bar x)$} if there
exist positive constants $\kappa$ and $r$ such that
\begin{eqnarray}   \label{in:usregdef}
    \dist{\bar p}{\Theta^{-1}(x)}\le\kappa d(x,z),\quad\forall
   x\in\ball{z}{r},\ \forall z\in\Theta(\bar p)\cap
   \ball{\bar x}{r}.
\end{eqnarray}
The value
$$
    \usreg{\Theta}{(\bar p,\bar x)}=\inf\{\kappa>0:\ \exists r>0 \hbox{ for which
    $(\ref{in:usregdef})$ holds } \}
$$
is called the {\it modulus of uniform} ({\it metric}) {\it hemiregularity}
of $\Theta$ at $(\bar p,\bar x)$.
\end{itemize}
\end{definition}

Roughly speaking, the above introduced properties refer to a
kind of ``quantitative solvability" of the systems
$$
  x\in\Theta(p),
$$
where $x$ is a parameter varying near the reference value $\bar x$
and $\bar p$ is a solution of the system $\bar x\in\Theta(p)$.
Notice that, according to the convention made about the value
of $\dist{\bar p}{\varnothing}$, if $\Theta$ is hemiregular at $(\bar p,\bar x)$,
then each of the perturbed systems must be solvable. Moreover, the
distance of the given solution $\bar p$ from the varying solution
sets must be linearly controlled by the distance of $x$ from
$\bar x$.

\begin{remark}     \label{rem:equivrefuhemic}
The property in Definition \ref{def:usreg}(ii) is clearly a
stronger variant than mere hemiregularity, even if the latter
takes place at each pair $(\bar p,z)$, with $z\in\Theta(\bar p)\cap
\ball{\bar x}{r}$. Indeed,
the constants $\kappa$ and $r$ in Definition \ref{def:usreg}(ii) are
postulated to be the same for every $z\in \Theta(\bar p)\cap
\ball{\bar x}{r}$, whence the term of the resulting property.
This uniformity requirement
enables one to reformulate such a property in a slightly different way,
that will be useful for the purposes of the present analysis:
$\Theta$ is uniformly hemiregular at $(\bar p,\bar x)$ iff there exist
positive $\kappa$ and $\delta$ such that
\begin{eqnarray}   \label{in:altusregdef}
    \dist{\bar p}{\Theta^{-1}(x)}\le\kappa\dist{x}{\Theta(\bar p)},
    \quad\forall x\in\ball{\bar x}{\delta}.
\end{eqnarray}
Indeed, if inequality $(\ref{in:usregdef})$ holds true, then for every
$x\in\ball{\bar x}{r/2}\backslash\Theta(\bar p)$ and $\epsilon
\in (0,1)$ it is possible to claim the existence of a proper $z_\epsilon\in
\Theta(\bar p)$, such that $d(x,z_\epsilon)<(1+\epsilon)
\dist{x}{\Theta(\bar p)}<r$, where $r$ is as in $(\ref{in:usregdef})$.
Thus, one obtains
$$
    \dist{\bar p}{\Theta^{-1}(x)}\le\kappa d(x,z_\epsilon)<
    \kappa (1+\epsilon)\dist{x}{\Theta(\bar p)},
$$
and hence, by arbitrariness of $\epsilon$, $(\ref{in:altusregdef})$
is satisfied with $\delta=r/2$. Conversely, since for every $z
\in\Theta(\bar p)$ it is $\dist{x}{\Theta(\bar p)}\le d(x,z)$,
then from condition $(\ref{in:altusregdef})$ one gets immediately the
validity of $(\ref{in:usregdef})$, with $r=\delta$.

Of course, whenever $\Theta$ is single-valued at $\bar p$,
uniform hemiregularity reduces to basic hemiregularity.
\end{remark}

The property of hemiregularity of $\Theta$ at $(\bar p,\bar x)$
is clearly obtained by weakening the well-known notion of
metric regularity of $\Theta$ at $(\bar p,\bar x)$, which postulates
the existence of positive reals $\kappa$ and $r$ such that
\begin{equation}     \label{in:defmr}
   \dist{p}{\Theta^{-1}(x)}\le\kappa \dist{x}{\Theta(p)},\quad\forall
   p\in\ball{\bar p}{r},\quad\forall x\in\ball{\bar x}{r}
\end{equation}
(see \cite{DonRoc14,KlaKum02,Mord06,RocWet98}).
This is readily done by fixing $p=\bar p$ in inequality $(\ref{in:defmr})$.
The following example shows that the resulting property
is actually weaker than metric regularity.

\begin{example} (A mapping which is hemiregular, whereas not metrically regular)
Let $P=\R^2$ and $X=\R$ be endowed with their usual Euclidean metric
structure. Consider the function $\Theta:\R^2\longrightarrow\R$ defined by
\begin{eqnarray*}
     \Theta(p_1,p_2)=\left\{\begin{array}{ll}
                                                 p_1+p^2_2, & \hbox{ if } p_1\ge 0, \\
                                                 p_1-p^2_2, & \hbox{ if } p_1<0,
                                               \end{array}\right.
\end{eqnarray*}
with reference point $\bar p=(0,0)$ and $\bar x=0$. $\Theta$ is not metrically
regular around $((0,0),0)$, inasmuch as, for any fixed $\kappa>0$ and $r>0$,
by taking $p=(0,\xi)$, with $0<\xi<\min\{r,\kappa^{-1}\}$, and $x=0$, the
inequality
$$
    \dist{p}{\Theta^{-1}(x)}=\xi\le\kappa\xi^2=\kappa\dist{x}{\Theta(p)}
$$
is evidently false. Nevertheless $\Theta$ turns out to be hemiregular
at the same reference pair. Indeed, for any $\kappa\ge 1$ and $r>0$,
as for every $x\in [-r,r]$ one has $(x,0)\in\Theta^{-1}(x)$, one obtains
$$
    \dist{(0,0)}{\Theta^{-1}(x)}\le |x|\le\kappa |x|=\kappa d(x,0),
$$
so that $\usreg{\Theta}{(0,0)}\le 1$.
\end{example}

Analogously, uniform hemiregularity of $\Theta$ at $(\bar p,\bar x)$
can be obtained by weakening a uniform variant of metric regularity
considered in \cite{Uder15}, which requires the existence of positive
reals $\kappa$ and $\delta$ such that
\begin{equation}     \label{in:defumr}
   \dist{p}{\Theta^{-1}(x)}\le\kappa \dist{x}{\Theta(p)},\quad\forall
   p\in\ball{\bar p}{r},\quad\forall x\in\ball{\Theta(\bar p)}{\delta}
\end{equation}
(see Definition 2.2 \cite{Uder15}). To see this, it suffices to
fix $p=\bar p$ and to notice that, if $(\bar p,\bar x)\in\grph\Theta$,
then $\ball{\bar x}{\delta}\subseteq\ball{\Theta(\bar p)}{\delta}$.
It follows that any criterion for $(\ref{in:defumr})$ to hold becomes
a sufficient condition for uniform hemiregularity. Some result of
this kind can be found in \cite{Uder15}. In particular, as a
consequence of Proposition 2.2 in \cite{Uder15}, whenever $\Theta:
P\rightrightarrows X$ is a convex process with closed graph between
Banach spaces, i.e. $\grph\Theta$ is a closed convex cone in
$P\times X$, and the following condition holds
\begin{equation}     \label{in:conproumr}
   \|\Theta^{-1}\|^-=\sup_{x\in\B} \inf_{p\in\Theta^{-1}(x)}
   \|p\|=\sup_{x\in\B}\dist{\nullv}{\Theta^{-1}(x)}<+\infty,
\end{equation}
where $\|\cdot\|$ denotes the norm on $P$, $\nullv$ stands
for the null vector of $P$ and $\B=
\ball{\nullv}{1}$, then $\Theta$ is also uniformly hemiregular
at any point $(\bar p,\bar x)\in\grph\Theta$, with
the following estimate
$$
   \usreg{\Theta}{(\bar p,\bar x)} \le\|\Theta^{-1}\|^-.
$$

\begin{remark}
Since any linear bounded operator $\Lambda:P\longrightarrow X$ between
Banach spaces, which is onto, is a convex process with closed graph satisfying
condition $(\ref{in:conproumr})$, then $\Lambda$ is also uniformly
hemiregular at each pair $(\bar p,\Lambda\bar p)$, with
$$
   \usreg{\Lambda}{(\bar p,\Lambda\bar p)} \le\|\Lambda^{-1}\|^-.
$$
As uniform hemiregularity implies hemiregularity, notice that
from the above fact it is possible to derive Proposition 5.2
in \cite{AraMor11}.
\end{remark}

Convex processes satisfying condition $(\ref{in:conproumr})$ and, as a
special case, surjective linear bounded operators, provide examples
of mappings which are uniformly hemiregular. Below, an example
is proposed of a uniformly hemiregular mapping, which fails to be
metrically regular in the sense of Definition 2.2 in \cite{Uder15}.

\begin{example} (A mapping failing to be
``uniformly metrically regular", yet uniformly hemiregular)
Let $P=\R$ and $X=\R^2$ be endowed with their usual Euclidean metric
structure. Consider the set-valued mapping $\Theta:\R\rightrightarrows\R^2$
defined by
$$
  \Theta(p)=\{x=(x_1,x_2)\in\R^2:\ x_1x_2=p\},
$$
and $\bar p=0$ and $\bar x=(0,0)$. In Example 2.2 in \cite{Uder15}
$\Theta$ has been shown to do not satisfy condition $(\ref{in:defumr})$.
Nonetheless $\Theta$ is uniformly hemiregular at $(0,(0,0))$, with
$\usreg{\Theta}{(0,(0,0))}\le 1$. Indeed, take $\delta=1$, so
that for every $x=(x_1,x_2)\in\ball{(0,0)}{1}$ one has $|x_1|\le 1$
and $|x_2|\le 1$. Since it is $\Theta^{-1}(x)=\{x_1x_2\}$, one obtains
$$
   \dist{0}{\Theta^{-1}(x)}=|x_1x_2|\le\min\{|x_1|,\, |x_2|\}=
   \dist{x}{\Theta(0)},\quad\forall x\in\ball{(0,0)}{1}.
$$
Therefore, inequality $(\ref{in:altusregdef})$, and hence Definition
\ref{def:usreg} (ii), are fulfilled with $\delta=\kappa=1$.
\end{example}

In Section \ref{Sect:1} it has been mentioned that the hemiregularity of
a set-valued mapping $\Theta:P\rightrightarrows X$ at $(\bar p,\bar x)$
can be characterized as Lipschitz lower semicontinuity property of its
inverse $\Theta^{-1}:X\rightrightarrows P$ at $(\bar x,\bar p)$
(see, for instance, \cite{AraMor11,Krug09,KruTha15}).
Recall that a set-valued mapping $\Phi:X\rightrightarrows P$ is said to be
{\it Lipschitz l.s.c.} at $(\bar x,\bar p)\in\grph\Phi$ if
there exist positive $\delta$ and $l$ such that
$$
   \Phi(x)\cap\ball{\bar p}{ld(x,\bar x)}\ne\varnothing,\quad\forall x
    \in\ball{\bar x}{\delta}.
$$

An analogous characterization can be established in the case of
uniform hemiregularity, provided that the Lipschitz lower semicontinuity
of the inverse is enhanced as follows:
a set-valued mapping $\Phi:X\rightrightarrows P$
is said to be {\it uniformly Lipschitz l.s.c.} at $(\bar x,\bar p)$
if there exist positive $\delta$ and $\l$ such that
\begin{eqnarray}     \label{def:uLiplsc}
   \Phi(x)\cap\ball{\bar p}{l\dist{x}{\Phi^{-1}(\bar p)}}\ne\varnothing,
   \quad\forall x\in\ball{\bar x}{\delta}.
\end{eqnarray}
The value
$$
    \uLlsc{\Phi}{(\bar x,\bar p)}=\inf\{l>0:\ \exists r>0 \hbox{ for which
    $(\ref{def:uLiplsc})$ holds\,} \}
$$
is called the {\it modulus of uniform Lipschitz lower semicontinuity}
of $\Phi$ at $(\bar x,\bar p)$.

\begin{proposition}      \label{pro:Liplscuhreg}
Let $\Theta:P\rightrightarrows X$ be a set-valued mapping between
metric spaces. $\Theta$ is uniformly hemiregular at $(\bar p,\bar x)
\in\grph\Theta$ iff $\Theta^{-1}$
is uniformly Lipschitz l.s.c. at $(\bar x,\bar p)$. Moreover, it
holds
$$
  \usreg{\Theta}{(\bar p,\bar x)}=
  \uLlsc{\Theta^{-1}}{(\bar x,\bar p)}.
$$
\end{proposition}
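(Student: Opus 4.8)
The plan is to reduce everything to the reformulation of uniform hemiregularity furnished by Remark \ref{rem:equivrefuhemic}, and then to observe that the defining inequality of uniform hemiregularity and the defining condition of uniform Lipschitz lower semicontinuity are two faces of the same statement about the quantity $\dist{\bar p}{\Theta^{-1}(x)}$: one phrased as an ``infimum $\le$'' bound, the other as a ``there exists a point $\le$'' attainment property. First I would record the elementary but crucial identity $(\Theta^{-1})^{-1}=\Theta$, so that, setting $\Phi=\Theta^{-1}$, the set $\Phi^{-1}(\bar p)$ occurring in $(\ref{def:uLiplsc})$ is exactly $\Theta(\bar p)$, and the membership $(\bar x,\bar p)\in\grph\Phi$ is equivalent to $(\bar p,\bar x)\in\grph\Theta$. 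With this substitution, uniform Lipschitz lower semicontinuity of $\Theta^{-1}$ at $(\bar x,\bar p)$ reads: there are positive $l,\delta$ with $\Theta^{-1}(x)\cap\ball{\bar p}{l\dist{x}{\Theta(\bar p)}}\ne\varnothing$ for all $x\in\ball{\bar x}{\delta}$; meanwhile, by Remark \ref{rem:equivrefuhemic}, uniform hemiregularity of $\Theta$ is equivalent to $(\ref{in:altusregdef})$, and that equivalence preserves the admissible constants, hence the modulus.

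Next I would prove $\usreg{\Theta}{(\bar p,\bar x)}\le\uLlsc{\Theta^{-1}}{(\bar x,\bar p)}$, the easy direction. If the intersection in $(\ref{def:uLiplsc})$ is nonempty for constants $l,\delta$, then for each admissible $x$ one may pick $p\in\Theta^{-1}(x)$ with $d(\bar p,p)\le l\dist{x}{\Theta(\bar p)}$, whence $\dist{\bar p}{\Theta^{-1}(x)}\le l\dist{x}{\Theta(\bar p)}$; thus every $l$ admissible for $(\ref{def:uLiplsc})$ is admissible for $(\ref{in:altusregdef})$ as a value of $\kappa$, and passing to infima gives the bound (and, in particular, uniform Lipschitz lower semicontinuity forces uniform hemiregularity).

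For the reverse inequality $\uLlsc{\Theta^{-1}}{(\bar x,\bar p)}\le\usreg{\Theta}{(\bar p,\bar x)}$, I would fix a $\kappa$ admissible for $(\ref{in:altusregdef})$ and an arbitrary $l>\kappa$, and show $l$ is admissible for $(\ref{def:uLiplsc})$. The one genuine subtlety, and the step I expect to be the main obstacle, is that $\dist{\bar p}{\Theta^{-1}(x)}$ is an infimum that need not be attained, so the bound $\dist{\bar p}{\Theta^{-1}(x)}\le\kappa\dist{x}{\Theta(\bar p)}$ does not by itself produce a point of $\Theta^{-1}(x)$ inside the required ball. I would resolve this by splitting into two cases. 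When $\dist{x}{\Theta(\bar p)}>0$, the slack $\kappa\dist{x}{\Theta(\bar p)}<l\dist{x}{\Theta(\bar p)}$ lets me extract from the infimum a point $p\in\Theta^{-1}(x)$ with $d(\bar p,p)<l\dist{x}{\Theta(\bar p)}$, so the intersection in $(\ref{def:uLiplsc})$ is nonempty. When $\dist{x}{\Theta(\bar p)}=0$, closedness of $\Theta(\bar p)$ gives $x\in\Theta(\bar p)$, equivalently $\bar p\in\Theta^{-1}(x)$, and the degenerate ball $\ball{\bar p}{0}=\{\bar p\}$ already meets $\Theta^{-1}(x)$. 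Letting $l\downarrow\kappa$ and then taking the infimum over admissible $\kappa$ yields the desired inequality.

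Combining the two inequalities proves the modulus identity; and since each modulus is finite precisely when the corresponding property holds, the equivalence of the two properties follows at once. Beyond the non-attainment of the infimum handled above, I anticipate no real difficulty: the whole argument hinges on matching an ``infimum $\le$'' statement with an ``attainment $\le$'' statement, which the strict slack $l>\kappa$ and the passage to the infimum over constants reconcile cleanly.
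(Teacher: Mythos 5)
Your proposal is correct and follows exactly the route the paper takes: the paper's proof is the one-line observation that the claim is a ``straightforward consequence of the above definitions and of inequality $(\ref{in:altusregdef})$'', and you have simply filled in the details of that same reduction, including the one genuine subtlety (the non-attained infimum, handled by the slack $l>\kappa$ and the closed-values convention in the degenerate case $\dist{x}{\Theta(\bar p)}=0$). Nothing to correct.
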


\begin{proof}
The thesis is a straightforward consequence of the above definitions
and of inequality $(\ref{in:altusregdef})$.
\end{proof}

The above characterization will be conveniently employed in the
proof of the implicit multifunction theorem presented in
Section \ref{Sect:4}.

\begin{remark}
It is useful to observe that the Lipschitz lower semicontinuity
of a mapping $\Phi:X\rightrightarrows P$, which is single-valued
in a neighbourhood of a point $\bar x\in X$, reduces to calmness
at that point, i.e. there exist positive $\delta$ and $l$ such that
$$
  \Phi(x)\in\ball{\Phi(\bar x)}{ld(x,\bar x)},\quad\forall x\in
  \ball{\bar x}{\delta}.
$$
Therefore, whenever a hemiregular set-valued mapping admits
an inverse which is locally single-valued, the latter turns out
to be calm.
\end{remark}

Metric regularity as well as many of its variants are known to admit
also characterization in terms of local surjection (openness)
properties. This is true also for uniform hemiregularity, whose surjective
behaviour is described in the next proposition.

\begin{proposition}
Let $\Theta:P\rightrightarrows X$ be a set-valued mapping between
metric spaces and let $(\bar p,\bar x)\in\grph\Theta$.

(i) If $\Theta$ is uniformly hemiregular at $(\bar p,\bar x)$ with modulus
$\usreg{\Theta}{\bar p}<+\infty$, then for any $0<a<{1\over
\usreg{\Theta}{\bar p}}$, there exists $\tilde\delta>0$ such that
\begin{eqnarray}   \label{inc:loatp}
    \Theta(\ball{\bar p}{r})\supseteq\ball{\Theta(\bar p)
    \cap\ball{\bar x}{\tilde\delta}}{ar},
    \quad\forall r\in [0,\tilde\delta).
\end{eqnarray}

(ii) If there exist positive reals $a$ and $\tilde\delta$ such that
inclusion $(\ref{inc:loatp})$ is satisfied, then $\Theta$ is
uniformly hemiregular at $(\bar p,\bar x)$ with modulus
$\usreg{\Theta}{\bar p}\le 1/a$.
\end{proposition}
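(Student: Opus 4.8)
The plan is to argue throughout with the reformulation $(\ref{in:altusregdef})$ of uniform hemiregularity from the Remark rather than with Definition \ref{def:usreg}(ii) directly, since $(\ref{in:altusregdef})$ is phrased through $\dist{x}{\Theta(\bar p)}$ and matches the structure of the covering inclusion $(\ref{inc:loatp})$; recall also that $\ball{S}{\rho}=\{x:\ \dist{x}{S}\le\rho\}$ and that $\bar x\in\Theta(\bar p)$.

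\smallskip
\noindent\textbf{Part (i).} Writing $\kappa_0=\usreg{\Theta}{(\bar p,\bar x)}$ and fixing $0<a<1/\kappa_0$, I would first choose an admissible modulus $\kappa\in(\kappa_0,1/a)$, so that $a\kappa<1$; since $\kappa>\kappa_0$, the infimum defining the modulus supplies a $\delta>0$ for which $(\ref{in:altusregdef})$ holds with this $\kappa$. I then set $\tilde\delta:=\delta/(1+a)$. For $r\in[0,\tilde\delta)$ and $x\in\ball{\Theta(\bar p)\cap\ball{\bar x}{\tilde\delta}}{ar}$, picking near-minimizers $z\in\Theta(\bar p)\cap\ball{\bar x}{\tilde\delta}$ of the distance yields simultaneously $\dist{x}{\Theta(\bar p)}\le ar$ and $d(x,\bar x)\le ar+\tilde\delta<(1+a)\tilde\delta=\delta$, so $x\in\ball{\bar x}{\delta}$. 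Applying $(\ref{in:altusregdef})$ gives $\dist{\bar p}{\Theta^{-1}(x)}\le\kappa\dist{x}{\Theta(\bar p)}\le\kappa a r<r$, the strictness coming from $a\kappa<1$; hence some $p\in\Theta^{-1}(x)$ has $d(p,\bar p)<r$, i.e. $x\in\Theta(\ball{\bar p}{r})$. The degenerate case $r=0$ is immediate, as then $x\in\Theta(\bar p)$ and $\bar p\in\Theta^{-1}(x)$.

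\smallskip
\noindent\textbf{Part (ii).} Here I would fix an arbitrary $\kappa>1/a$ and produce a $\delta>0$ verifying $(\ref{in:altusregdef})$ with constant $\kappa$; letting $\kappa\downarrow 1/a$ then yields $\usreg{\Theta}{(\bar p,\bar x)}\le 1/a$. The decisive step is a localization identity: for $\delta$ small enough (it suffices that $\delta<\tilde\delta/2$ and $\kappa\delta<\tilde\delta$) one has, for every $x\in\ball{\bar x}{\delta}$,
$$
  \dist{x}{\Theta(\bar p)\cap\ball{\bar x}{\tilde\delta}}=\dist{x}{\Theta(\bar p)}.
$$
This holds because $\dist{x}{\Theta(\bar p)}\le d(x,\bar x)\le\delta$, so any near-minimizer $z\in\Theta(\bar p)$ obeys $d(z,\bar x)\le d(z,x)+d(x,\bar x)<\tilde\delta$ and therefore already lies in the truncated set. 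Given such an $x$ with $d:=\dist{x}{\Theta(\bar p)}>0$, I set $r:=\kappa d$; then $r<\tilde\delta$ by the choice of $\delta$, while $a\kappa>1$ gives $\dist{x}{\Theta(\bar p)\cap\ball{\bar x}{\tilde\delta}}=d<ar$, so $x$ lies in the right-hand side of $(\ref{inc:loatp})$. The inclusion then forces $x\in\Theta(\ball{\bar p}{r})$, whence $\dist{\bar p}{\Theta^{-1}(x)}\le r=\kappa\dist{x}{\Theta(\bar p)}$; the case $x\in\Theta(\bar p)$ is trivial.

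\smallskip
\noindent\textbf{Main obstacle.} The crux is the localization identity in part (ii): the inclusion $(\ref{inc:loatp})$ only controls proximity to the \emph{truncated} image $\Theta(\bar p)\cap\ball{\bar x}{\tilde\delta}$, whereas $(\ref{in:altusregdef})$ is stated through the full set $\Theta(\bar p)$. Reconciling the two requires verifying that near-minimizers of $\dist{x}{\Theta(\bar p)}$ cannot escape $\ball{\bar x}{\tilde\delta}$, and then coordinating the three radii $\tilde\delta$, $\delta$ and $r=\kappa d$ so that simultaneously $x\in\ball{\bar x}{\delta}$, the two distances coincide, and $r<\tilde\delta$ keeps $r$ within the admissible range of $(\ref{inc:loatp})$. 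Part (i) is comparatively routine, the only delicate point being the strict inequality $a\kappa<1$, which is what places the solution $p$ strictly inside $\ball{\bar p}{r}$.
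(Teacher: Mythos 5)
Your proof is correct and follows essentially the same route as the paper's: both directions are run through the reformulation $(\ref{in:altusregdef})$, part (i) by fixing an admissible $\kappa$ with $a\kappa<1$, and part (ii) by assigning to each $x$ near $\bar x$ the radius $r$ proportional to $\dist{x}{\Theta(\bar p)}$ and feeding it into the inclusion $(\ref{inc:loatp})$. Where you differ is in rigor at exactly the spot you flag as the main obstacle, and there your version is tighter than the paper's. In part (ii) the paper takes $x\in\ball{\Theta(\bar p)}{\delta}\cap\ball{\bar x}{\delta}$, sets $r=\dist{x}{\Theta(\bar p)}$, and asserts without comment that $x$ belongs to $\ball{\Theta(\bar p)\cap\ball{\bar x}{\delta}}{r}$ --- i.e.\ it silently identifies the distance to the \emph{truncated} set with the distance to the full set, which is false for general $\delta$ since near-minimizers a priori only land in $\ball{\bar x}{2\delta}$; your localization identity, with the safeguard $2\delta<\tilde\delta$ so that near-minimizers of $\dist{x}{\Theta(\bar p)}$ stay inside $\ball{\bar x}{\tilde\delta}$, is precisely the missing verification, and your condition $\kappa\delta<\tilde\delta$ plays the same role as the paper's $\delta<a\tilde\delta$ in keeping $r$ within the admissible range $[0,\tilde\delta)$. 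Likewise, in part (i) the paper argues for $x\in\ball{\Theta(\bar p)}{ar}\cap\ball{\bar x}{\tilde\delta}$, which is not literally the set $\ball{\Theta(\bar p)\cap\ball{\bar x}{\tilde\delta}}{ar}$ appearing in $(\ref{inc:loatp})$, whereas your choice $\tilde\delta=\delta/(1+a)$, together with the triangle-inequality check that the stated set sits inside $\ball{\bar x}{\delta}$, treats the statement as written. Your use of $\kappa>1/a$ followed by passage to the infimum in (ii), instead of the paper's direct $\kappa=1/a$ (which is legitimate because the enlargements $\ball{S}{\rho}$ are defined by a non-strict inequality), is an immaterial variation.
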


\begin{proof}
(i) According to the equivalent reformulation of uniform
hemiregularity given in Remark \ref{rem:equivrefuhemic},
for any fixed $\kappa$ such that $\usreg{\Theta}{\bar p}<
\kappa<1/a$, there exists $\delta>0$
such that inequality $(\ref{in:altusregdef})$ holds. Then, set
$\tilde\delta=\delta\kappa$ and take arbitrary $r\in [0,\tilde\delta)$
and $x\in \ball{\Theta(\bar p)}{ar}\cap\ball{\bar x}{\tilde\delta}$.
Notice that, with that choice of constants, one has
$$
  \dist{x}{\Theta(\bar p)}\le {\tilde\delta\over\kappa}=\delta.
$$
Thus, inequality $(\ref{in:altusregdef})$ applies, that is
$$
  \dist{\bar p}{\Theta^{-1}(x)}\le\kappa\dist{x}{\Theta(\bar p)}
  \le\kappa ar<r.
$$
This entails that there exists $p\in P$ such that $x\in\Theta(p)$
and $p\in\ball{\bar p}{r}$, what gives that $x\in\Theta
(\ball{\bar p}{r})$. This shows the first assertion in the thesis.

(ii) Assume now inclusion $(\ref{inc:loatp})$ to hold with
positive real $a$ and $\tilde\delta$. Define $\kappa=1/a$ and
take $\delta< a\tilde\delta$. Whenever $x$ is an arbitrary element
of the set $\ball{\Theta(\bar p)}{\delta}\cap\ball{\bar x}{\delta}$,
letting $r=\dist{x}{\Theta(\bar p)}$, one has $r/a\in [0,\tilde\delta)$.
Since, according to the assumption, it is
$$
   \ball{\Theta(\bar p)\cap\ball{\bar x}{\delta}}{a{r\over a}}
   \subseteq\Theta(\ball{\bar p}{r/a}),
$$
there exists $p\in\ball{\bar p}{r/a}$ such that $x\in\Theta(p)$.
Thus, one obtains
$$
   \dist{\bar p}{\Theta^{-1}(x)}\le \dist{\bar p}{p}\le
   {r\over a}=\kappa\dist{x}{\Theta(\bar p)}.
$$
The last inequality, which is valid for every $x\in
\ball{\Theta(\bar p)}{\delta}\cap\ball{\bar x}{\delta}$, completes
the proof.
\end{proof}

\vskip1cm


\section{Uniform hemiregularity and exact penalization}
\label{Sect:3}

Let us consider a constrained optimization problem of the general
form
$$
    \min\varphi(x) \quad\hbox{ subject to }\quad
   x\in R,    \leqno ({\mathcal P})
$$
where $\varphi:X\longrightarrow\R\cup\{\pm\infty\}$ is the objective
function and $R$ denotes the feasible region, that throughout
the paper is assumed to be a nonempty closed set. The basic idea
of penalty methods consists in seeking solutions to $({\mathcal P})$
by solving unconstrained optimization problems, whose objective
function is formed by adding to $\varphi$ a term measuring the
constraint violation (see \cite{Erem67,Zang67,Zasl10}).
Since $R$ is closed, one possible representation
of the geometric constraint set $R$ is as $R=\{x\in X:\
\dist{x}{R}\le 0\}$. Consequently, one way of implementing penalty
methods is to consider the unconstrained problems
$$
  \min_{x\in\ X} \ [\varphi(x)+l\dist{x}{R}], \leqno ({\mathcal P}_l)
$$
with $l>0$. Letting $\varphi_l=\varphi+l\dist{\cdot}{R}$, function
$\varphi_l$ is said to be exact at a local solution $\bar x\in R$
to $({\mathcal P})$ provided that $\bar x$ is also a local solution
to problem $({\mathcal P}_l)$. Thus, one is interested in establishing
conditions under which $\varphi_l$ is exact, for some $l$.
It is well know that, whenever $\varphi$ is locally Lipschitz
with constant $\kappa$ at $\bar x$, then $\varphi_l$ is exact at the
same point, for every $l>\kappa$ (see, for instance \cite{Clar83}).
This fact can be taken as a starting point for developing applicable
optimality conditions for $({\mathcal P})$, especially with the aid
of nonsmooth analysis tools.
When, as it often happens in concrete applications,
$R$ is defined by specific constraints (such as inequality/equality
constraints, variational/equilibrium conditions, and so on) some further
conditions are employed to replace the geometric penalty term
$\dist{x}{R}$ by verifiable measures of the constraint violation,
called error bounds, which are expressed in terms of problem
data.

The aforementioned exactness condition comes quite expected, inasmuch as
it links the behaviour of $\varphi$ with that of function $x\mapsto
\dist{x}{R}$, which is Lipschitz continuous, indeed.
If $\varphi$ fails to be locally Lipschitz the above approach must
be modified, but its spirit can be somehow maintained by introducing
an additional assumption called problem calmness (see
\cite{Burk91,Clar76,Uder10}). This notion
requires to embed the given problem $({\mathcal P})$ in a class
of parametric optimization problems, whose feasible region comes
to depend on a parameter $p$ varying in a metric space $(P,d)$,
and then to postulate a controlled behaviour for the variations of
$\varphi$ near $\bar x$, with respect to parameter (and hence
feasible region) variations. Here, fixed a reference element
$\bar p\in P$, a set-valued mapping ${\mathcal R}:P\rightrightarrows
X$ is meant to be a parameterization of $R$ near $(\bar p,\bar x)$
provided that it fulfils the following two requirements

\begin{itemize}

\item[(i)] ${\mathcal R}(\bar p)=R$;

\item [(ii)] there exist $r>0$ and $\tau_0>0$ such that
$$
   \forall\tau\in (0,\tau_0)\ \exists p_\tau\in\ball{\bar p}{\tau}
   \backslash\{\bar p\}
   \hbox{ such that } {\mathcal R}(p_\tau)\cap\ball{\bar x}{r}
   \ne\varnothing.
$$
\end{itemize}
A given parameterization ${\mathcal R}:P\rightrightarrows
X$ of $R$ near $(\bar p,\bar x)$ enables one to define the
related family of parametric optimization problems
$$
    \min\varphi(x) \quad\hbox{ subject to }\quad
   x\in{\mathcal R}(p)    \leqno ({\mathcal P}_p)
$$
embedding $({\mathcal P})$, in the sense that for $p=\bar p$ one
obtains $({\mathcal P})$ as a special case.

\begin{definition}     \label{def:pcalm}
Given a problem $({\mathcal P})$, let ${\mathcal R}:P\rightrightarrows
X$ be a parameterization of $R={\mathcal R}(\bar p)$ near
$(\bar p,\bar x)$, where $\bar x$ is a local minimizer of $({\mathcal P})$.
Problem $({\mathcal P})$ is called {\it calm} at $\bar x$ with respect to
${\mathcal R}$ if there exist positive $r$ and $\zeta$
such that
\begin{eqnarray}   \label{in:pcalmdef}
   \varphi(x)\ge\varphi(\bar x)-\zeta d(p,\bar p),\quad\forall
   x\in\ball{\bar x}{r}\cap{\mathcal R}(p),\ \forall
   p\in\ball{\bar p}{r}.
\end{eqnarray}
The value
$$
    \pcalm{\mathcal P}{\mathcal R}{\bar x}=\inf\{\zeta>0:
   \exists r>0\hbox{ for which  $(\ref{in:pcalmdef})$ holds } \}
$$
is called {\it modulus of problem calmness} of $({\mathcal P})$
at $\bar x$, with respect to ${\mathcal R}$.
\end{definition}

Roughly speaking, the concept of problem calmness captures a
suitable interplay that intertwines the ``not optimal behaviour"
of $\varphi$ out from the feasible region of $({\mathcal P})$
and the perturbation behaviour of a parameterization of $R$
near $\bar x$, as $p$ approaches $\bar p$. This fact is illustrated
in a very simple case through the next example.

\begin{example}     \label{ex:calmparnoLip}
Consider a problem $({\mathcal P})$ defined by $X=\R$, $R=(-\infty,0]$,
and $\varphi:\R\longrightarrow\R$, given by
\begin{eqnarray*}
      \varphi(x)=\left\{\begin{array}{ll}
           \sqrt{-x}, & \hbox{ if } x\le 0, \\
           -\sqrt{x}, & \hbox{ if } x>0.
      \end{array}\right.
\end{eqnarray*}
It is evident that $\bar x=0$ is a (global) solution to  $({\mathcal P})$.
Letting $P=\R$ equipped with its usual Euclidean metric
and let $\bar p=0$, consider the parameterization ${\mathcal R}_\beta:
\R\rightrightarrows\R$ defined by
$$
    {\mathcal R}_\beta(p)=(-\infty,|p|^\beta],\quad\beta>0.
$$
Taking $x_p=|p|^\beta$, with $|p|<r$, one easily finds
$$
   {\inf_{x\in {\mathcal R}_\beta(p)}\varphi(x)
    -\varphi(\bar x)\over |p|}=
   {\varphi(x_p)-\varphi(\bar x)\over |p|}=-|p|^{{\beta\over 2}-1},
   \quad\forall p\in\R\backslash\{0\}.
$$
Therefore, according to Definition \ref{def:pcalm},  $({\mathcal P})$ turns
out to be calm at $\bar x$ with respect to ${\mathcal R}_\beta$
iff $\beta\ge 2$. Notice that $\varphi$ is not locally Lipschitz
at $0$.
\end{example}

Once a parameterization of $R$ has been defined, the related
notion of problem calmness allows one to establish
an exact penalization result by introducing the following
penalty functions
$$
    \varphi_l(p,x)=\varphi(x)+l\dist{x}{{\mathcal R}(p)}.
$$
In this concern, the property of uniform hemiregularity of
${\mathcal R}$ at $\bar p$ plays an essential role, as it
appears from the below result.

\begin{theorem}     \label{thm:exctpen}
Let $\bar x\in R$ be a local solution to  $({\mathcal P})$ and let
${\mathcal R}:P\rightrightarrows X$ be a parameterization of $R$
at $(\bar p,\bar x)$, with $\bar p\in P$ being a reference value. If

\begin{enumerate}

\item[(i)] ${\mathcal R}:P\rightrightarrows X$ is uniformly hemiregular
at $\bar p$;

\item[(ii)] $({\mathcal P})$ is calm at $\bar x$ with respect to ${\mathcal R}$;

\end{enumerate}
then, function $\varphi_l(\bar p,\cdot)$ is exact at $\bar x$ for every
$l>\usreg{\mathcal R}{\bar p}\cdot\pcalm{\mathcal P}{\mathcal R}{\bar x}$.
\end{theorem}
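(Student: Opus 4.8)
The plan is to verify exactness directly from its definition, i.e.\ to show that $\bar x$ locally minimizes $\varphi_l(\bar p,\cdot)=\varphi(\cdot)+l\,\dist{\cdot}{\mathcal R(\bar p)}$. Since $\mathcal R(\bar p)=R$ and $\bar x\in R$, one has $\varphi_l(\bar p,\bar x)=\varphi(\bar x)$, so it suffices to produce a radius $\rho>0$ with
\[
   \varphi(x)+l\,\dist{x}{R}\ge\varphi(\bar x),\qquad\forall x\in\ball{\bar x}{\rho}.
\]
First I would fix the constants. Because $l>\usreg{\mathcal R}{\bar p}\cdot\pcalm{\mathcal P}{\mathcal R}{\bar x}$, I can pick $\kappa>\usreg{\mathcal R}{\bar p}$ and $\zeta>\pcalm{\mathcal P}{\mathcal R}{\bar x}$ still satisfying $\kappa\zeta<l$. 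The reformulation $(\ref{in:altusregdef})$ of uniform hemiregularity (applied with $\mathcal R(\bar p)=R$) then furnishes $\delta_1>0$ such that $\dist{\bar p}{\mathcal R^{-1}(x)}\le\kappa\,\dist{x}{R}$ for all $x\in\ball{\bar x}{\delta_1}$, while problem calmness furnishes $\delta_2>0$ such that $\varphi(x')\ge\varphi(\bar x)-\zeta d(p,\bar p)$ whenever $x'\in\ball{\bar x}{\delta_2}\cap\mathcal R(p)$ and $p\in\ball{\bar p}{\delta_2}$.

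The central step is a \emph{pull-back}: given $x$ near $\bar x$, I would use hemiregularity to find a parameter $p$ near $\bar p$ admitting $x$ as a feasible point (so the calmness inequality becomes applicable at the pair $(p,x)$), with $d(p,\bar p)$ linearly controlled by $\dist{x}{R}$. Concretely, set $\rho=\min\{\delta_1,\delta_2,\delta_2/(2\kappa)\}$ and fix $x\in\ball{\bar x}{\rho}$. For each small $\epsilon>0$ the hemiregularity estimate yields $p_\epsilon\in\mathcal R^{-1}(x)$ with $d(p_\epsilon,\bar p)\le\kappa\,\dist{x}{R}+\epsilon$; the choice of $\rho$ ensures $p_\epsilon\in\ball{\bar p}{\delta_2}$ for $\epsilon$ small, and by construction $x\in\ball{\bar x}{\delta_2}\cap\mathcal R(p_\epsilon)$. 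Calmness then gives $\varphi(x)\ge\varphi(\bar x)-\zeta\bigl(\kappa\,\dist{x}{R}+\epsilon\bigr)$, whence
\[
   \varphi(x)+l\,\dist{x}{R}\ge\varphi(\bar x)+(l-\kappa\zeta)\,\dist{x}{R}-\zeta\epsilon.
\]
Letting $\epsilon\downarrow 0$ and invoking $l-\kappa\zeta>0$ delivers the desired inequality.

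The argument is largely routine once the pull-back is in place; the points requiring care are the coordination of the radii --- one must guarantee that the pulled-back parameter $p_\epsilon$ lands in the calmness neighbourhood $\ball{\bar p}{\delta_2}$, which is exactly why $\dist{x}{R}\le\rho$ has to be forced small relative to $\delta_2/\kappa$ --- and the $\epsilon$-relaxation, needed since the infimum defining $\dist{\bar p}{\mathcal R^{-1}(x)}$ need not be attained. The case $x\in R$ is absorbed automatically: then $\dist{x}{R}=0$ forces (using closedness of values) $\bar p\in\mathcal R^{-1}(x)$, and calmness at $p=\bar p$ reduces to the local optimality of $\bar x$ for $(\mathcal P)$. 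I expect the main obstacle to be precisely this radius bookkeeping, rather than any deeper difficulty.
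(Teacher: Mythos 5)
Your proposal is correct and rests on exactly the same mechanism as the paper's own proof: uniform hemiregularity in the reformulated form $(\ref{in:altusregdef})$ is used to pull each $x$ near $\bar x$ back to a parameter $p$ with $x\in{\mathcal R}(p)$ and $d(p,\bar p)\le\kappa\dist{x}{R}+\epsilon$, after which problem calmness and the inequality $l>\kappa\zeta$ close the argument. The only difference is organizational: the paper runs this pull-back ab absurdo along a sequence $x_n\to\bar x$ violating exactness (which forces it to verify $x_n\notin R$ and $\dist{x_n}{R}\to 0$ along the way), whereas you argue directly with explicit radius bookkeeping, obtaining in passing the slightly sharper estimate $\varphi_l(\bar p,x)\ge\varphi(\bar x)+(l-\kappa\zeta)\dist{x}{R}$ on an explicit ball --- both renderings are sound.
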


\begin{proof}
Fix an abitrary $l$, with $l>\usreg{\mathcal R}{\bar p}\cdot\pcalm{\mathcal P}
{\mathcal R}{\bar x}$. Then, according to Definition \ref{def:usreg}(ii)
and Definition \ref{def:pcalm}, it is possible to pick $\kappa>
\usreg{\mathcal R}{\bar p}$, $\zeta>\pcalm{\mathcal P}{\mathcal R}
{\bar x}$ and $\epsilon>0$ such that:
\begin{itemize}

\item for some $r_1>0$ it holds
\begin{eqnarray}       \label{in:usreghypi}
   \dist{\bar p}{{\mathcal R}^{-1}(x)}\le\kappa
   \dist{x}{{\mathcal R}(\bar p)},\quad\forall
   x\in\ball{{\mathcal R}(\bar p)}{r_1};
\end{eqnarray}

\item for some $r_2>0$ it holds
\begin{eqnarray}     \label{in:pcalmhypii}
 \varphi(x)\ge\varphi(\bar x)-\zeta d(p,\bar p),\quad\forall
    x\in\ball{\bar x}{r_2}\cap{\mathcal R}(p),\ \forall
   p\in\ball{\bar p}{r_2};
\end{eqnarray}

\item it is
\begin{eqnarray}     \label{in:kappazetaeps}
    l>\kappa(1+\epsilon)\zeta.
\end{eqnarray}
\end{itemize}
Ab absurdo, let us suppose that $\varphi_l(\bar p,\cdot)$ fails to be
exact. This means that for every $n\in\N$ there exists $x_n\in\ball{\bar x}
{1/n}$ such that
\begin{eqnarray} \label{in:absnegexact}
     \varphi(x_n)+l\dist{x_n}{{\mathcal R}(\bar p)}<\varphi(\bar x).
\end{eqnarray}
Since $\bar x$ is a local solution to $({\mathcal P})$ and $x_n\to
\bar x$ as $n\to\infty$, there must exist $\bar n\in\N$ such that
$x_n\not\in{\mathcal R}(\bar p)=R$ for every $n\in\N$, with $n\ge
\bar n$. Consequently, as ${\mathcal R}(\bar p)$
is a closed set, one has
$$
    \dist{x_n}{{\mathcal R}(\bar p)}>0,\quad\forall n\in\N,\ n\ge\bar n.
$$
On the other hand, as $\bar x\in{\mathcal R}(\bar p)$, one has
$$
    \dist{x_n}{{\mathcal R}(\bar p)}\le d(x_n,\bar x)\le{1\over n},
$$
whence
\begin{eqnarray*}
    \lim_{n\to\infty}\dist{x_n}{{\mathcal R}(\bar p)}=0.
\end{eqnarray*}
Thus, by increasing the value of $\bar n$ if needed, one obtains
$x_n\in\ball{{\mathcal R}(\bar p)}{r_1}\backslash{\mathcal R}(\bar p)$
and hence, according to $(\ref{in:usreghypi})$, it must be
$$
    \dist{\bar p}{{\mathcal R}^{-1}(x_n)}\le\kappa
    \dist{x_n}{{\mathcal R}(\bar p)},\quad\forall
    n\in\N,\ n\ge\bar n.
$$
This means that for every $n\in\N$, with $n\ge\bar n$, there exists
$p_n\in {\mathcal R}^{-1}(x_n)$ such that
\begin{eqnarray}      \label{in:distpnbarp}
  d(p_n,\bar p)<\kappa(1+\epsilon)\dist{x_n}{{\mathcal R}(\bar p)},
\end{eqnarray}
where $\epsilon$ is as in inequality $(\ref{in:kappazetaeps})$.
Notice that, as $x_n\in{\mathcal R}(p_n)$ and $x_n\not\in
{\mathcal R}(\bar p)$, it has to be $p_n\ne\bar p$.
From inequalities $(\ref{in:absnegexact})$ and $(\ref{in:distpnbarp})$,
it follows
$$
   {\kappa(1+\epsilon)\over d(p_n,\bar p)}[\varphi(x_n)-\varphi(\bar x)]<
   {\varphi(x_n)-\varphi(\bar x)\over\dist{x_n}{{\mathcal R}
   (\bar p)}}<-l,
$$
whence, on account of inequality $(\ref{in:kappazetaeps})$,
one obtains
$$
   \varphi(x_n)<\varphi(\bar x)-{l\over\kappa(1+\epsilon)}
  d(p_n,\bar p)<-\zeta d(p_n,\bar p).
$$
Since $p_n\to\bar p$ as $n\to\infty$ because of $(\ref{in:distpnbarp})$,
by increasing further the value of $\bar n\in\N$, if needed, one finds that
$x_n\in\ball{\bar x}{r_2}\cap{\mathcal R}(p_n)$ and $p_n\in
\ball{\bar p}{r_2}$. Therefore, the last inequality contradicts
inequality $(\ref{in:pcalmhypii})$. This completes the proof.
\end{proof}

\begin{remark}
It is to be noted that the above theorem can be also derived as a
special case from a more general theorem, which was recently established
within a unifying approach to the theory of exactness in
penalization methods (see \cite[Theorem 2.12]{Dolg16}). Nevertheless,
in formulating that theorem, the uniform hemicontinuity is
not mentioned and its role remains hidden, because the mere
topological space setting, where optimization problems are
considered, does not allow to do so. Moreover, some extra
assumptions enter the statement of that result. Theorem
\ref{thm:exctpen} is therefore a refinement of a special case
of Theorem 2.12, whose self-contained proof here proposed
emphasizes the role of the main property under study.
\end{remark}

It is worth mentioning that in the original definition of problem
calmness the parameter $p$ was supposed to perturb linearly the
constraining mappings (see \cite{Burk91,Clar76}). In that special
case, it was possible to fully characterize the exactness of penalty
functions by means of the resulting notion of problem calmness, what
does not remain true for perturbations of more general type
(see \cite{Uder10}). Thus
the above result is complemented here with a result providing a sufficient
condition, upon which problem $({\mathcal P}_{\bar p})$ turns out
to be calm with respect to a given parameterization.

\begin{proposition}
With reference to a problem parameterization $({\mathcal P}_p)$,
let $\bar x\in{\mathcal R}(\bar p)$ be a local minimizer of
$({\mathcal P}_{\bar p})$, with $\bar p\in P$.
Suppose that

\begin{enumerate}

\item[(i)] ${\mathcal R}$ is calm at $(\bar p,\bar x)$, i.e.
there exist positive reals $\zeta$ and $r$ such that
\begin{equation}     \label{in:calmsetvmap}
  {\mathcal R}(p)\cap\ball{\bar x}{r}\subseteq
  \ball{{\mathcal R}(\bar p)}{\zeta\dist{p}{\bar p}},
  \quad\forall p\in \ball{\bar p}{r}.
\end{equation}

\item[(ii)] there exists $l>0$ such that $\varphi_l(\bar p,\cdot)$
is exact at $\bar x$.
\end{enumerate}
Then, problem $({\mathcal P}_{\bar p})$ is calm at $\bar p$
with respect to ${\mathcal R}$.
\end{proposition}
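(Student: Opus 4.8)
The plan is to translate each hypothesis into a pointwise inequality valid on a neighbourhood of the reference data and then to chain the two inequalities together, reading off the resulting modulus of problem calmness. First I would unpack hypothesis (ii). Since $\varphi_l(\bar p,\cdot)=\varphi+l\dist{\cdot}{\mathcal R(\bar p)}$ and $\bar x\in\mathcal R(\bar p)$ forces $\dist{\bar x}{\mathcal R(\bar p)}=0$, the exactness of $\varphi_l(\bar p,\cdot)$ at $\bar x$ means precisely that $\bar x$ is a local minimizer of this function; hence there is some $\rho>0$ with
$$
  \varphi(x)+l\dist{x}{\mathcal R(\bar p)}\ge\varphi(\bar x),\qquad\forall x\in\ball{\bar x}{\rho},
$$
equivalently $\varphi(x)\ge\varphi(\bar x)-l\dist{x}{\mathcal R(\bar p)}$ on $\ball{\bar x}{\rho}$. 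This is the crucial lower estimate on $\varphi$ near $\bar x$, expressed through the distance from the \emph{unperturbed} feasible region $\mathcal R(\bar p)=R$.

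Next I would exploit hypothesis (i) to control that distance by the size of the parameter displacement. The calmness inclusion $(\ref{in:calmsetvmap})$ says exactly that whenever $p\in\ball{\bar p}{r}$ and $x\in\mathcal R(p)\cap\ball{\bar x}{r}$, then $x\in\ball{\mathcal R(\bar p)}{\zeta d(p,\bar p)}$, i.e.
$$
  \dist{x}{\mathcal R(\bar p)}\le\zeta\, d(p,\bar p).
$$
In words, the distance of a point feasible for the perturbed problem from the original feasible region is linearly dominated by $d(p,\bar p)$.

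Finally I would combine the two estimates on a common neighbourhood. Setting $\tilde r=\min\{r,\rho\}$, take any $p\in\ball{\bar p}{\tilde r}$ and any $x\in\ball{\bar x}{\tilde r}\cap\mathcal R(p)$. Then both estimates apply, and substituting the bound on $\dist{x}{\mathcal R(\bar p)}$ into the lower estimate for $\varphi$ yields
$$
  \varphi(x)\ge\varphi(\bar x)-l\dist{x}{\mathcal R(\bar p)}\ge\varphi(\bar x)-l\zeta\, d(p,\bar p).
$$
This is precisely the defining inequality $(\ref{in:pcalmdef})$ of problem calmness, holding with radius $\tilde r$ and modulus $l\zeta$; in particular it gives the estimate $\pcalm{\mathcal P}{\mathcal R}{\bar x}\le l\zeta$. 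I do not expect a genuine obstacle here: the argument is a direct chaining of the two hypotheses. The only point demanding care is the bookkeeping of radii — the calmness inclusion requires \emph{both} $p\in\ball{\bar p}{r}$ and $x\in\ball{\bar x}{r}$, whereas exactness supplies its own radius $\rho$ — so one must intersect the neighbourhoods and check that the chosen $x$ and $p$ simultaneously meet the hypotheses of (i) and (ii) before the two inequalities may be composed.
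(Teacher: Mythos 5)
Your proof is correct, and it is the direct (contrapositive-free) rendering of what the paper establishes by contradiction. The paper negates the problem-calmness inequality to extract sequences $p_n\in\ball{\bar p}{1/n}\backslash\{\bar p\}$ and $x_n\in{\mathcal R}(p_n)\cap\ball{\bar x}{1/n}$ with $\varphi(x_n)<\varphi(\bar x)-n\,d(p_n,\bar p)$, invokes the calmness inclusion of hypothesis (i) --- applicable for large $n$ because $p_n\to\bar p$ and $x_n\to\bar x$ --- to obtain $\dist{x_n}{{\mathcal R}(\bar p)}\le\zeta\,d(p_n,\bar p)$, and concludes $\varphi(x_n)<\varphi(\bar x)-(n/\zeta)\dist{x_n}{{\mathcal R}(\bar p)}$, which contradicts the exactness in (ii) as soon as $n/\zeta$ exceeds $l$ and $x_n$ enters the neighbourhood where exactness holds. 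You instead unpack (ii) as the local lower bound $\varphi(x)\ge\varphi(\bar x)-l\dist{x}{{\mathcal R}(\bar p)}$ on $\ball{\bar x}{\rho}$ (correctly using $\dist{\bar x}{{\mathcal R}(\bar p)}=0$) and chain it with the same distance estimate from (i) on the ball of radius $\min\{r,\rho\}$. The two arguments rest on the identical inequality chain, so this is a difference of organization rather than of mathematical substance; still, your version buys two things the paper's leaves implicit: an explicit quantitative conclusion $\pcalm{{\mathcal P}_{\bar p}}{\mathcal R}{\bar x}\le l\zeta$, which the contradiction argument never surfaces, and the replacement of the sequential bookkeeping (the paper's tacit ``for $n$ large enough'' steps needed to make both hypotheses apply simultaneously to $(p_n,x_n)$) by a single intersection of radii --- precisely the point of care you flag at the end. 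Conversely, the paper's ab absurdo format merely matches the style of its proof of Theorem \ref{thm:exctpen}; it yields nothing extra here.
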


\begin{proof}
Assume, ab absurdo, that for every $n\in\N$ there exist
$p_n\in\ball{\bar p}{1/n}\backslash\{\bar p\}$ and $x_n
\in {\mathcal R}(p_n)\cap\ball{\bar x}{1/n}$ such that
\begin{eqnarray}     \label{in:prcalmneg}
  \varphi(x_n)<\varphi(\bar x)-n\dist{p_n}{\bar p}.
\end{eqnarray}
Since ${\mathcal R}$ is supposed to be calm at $(\bar p,\bar x)$,
there exist positive reals $\zeta$ and $r$ such that
inclusion $(\ref{in:calmsetvmap})$ holds true.
By virtue of this inclusion,
the fact that $p_n$ converges to $\bar p$ and $x_n$ converges
to $\bar x$ as $n\to +\infty$ implies that $x_n\in
\ball{{\mathcal R}(\bar p)}{\zeta\dist{p_n}{\bar p}}$,
so that one obtains
$$
  \dist{x_n}{{\mathcal R}(\bar p)}\le\zeta\dist{p_n}{\bar p}.
$$
Consequently, from inequality $(\ref{in:prcalmneg})$ it follows
$$
   \varphi(x_n)<\varphi(\bar x)-{n\over\zeta}
   \dist{x_n}{{\mathcal R}(\bar p)},
$$
which evidently contradicts hypothesis (ii).
\end{proof}


\section{An implicit function theorem for uniform hemiregularity}
\label{Sect:4}

In the main result of the previous section, the exact penalization
of a constrained optimization problem is obtained upon a uniform
hemiregularity assumption on a parameterization of its feasible
region. In order to make viable such an approach, conditions are
needed, which can guarantee a given parameterization to be uniformly
hemiregular. This issue is considered in the present section in
the case of feasible regions defined by an abstract equilibrium constraint,
namely by constraints of the form
$$
  \omega\in\Phi(x),  \leqno  ({\mathcal E})
$$
where $\Phi:X\rightrightarrows Y$ is a given set-valued mapping
between metric spaces and $\omega$ is a given element of $Y$.
The format of problem $({\mathcal E})$ is general enough to cover the constraint
systems mostly occurring in the mainly investigated optimization
problems, such as equality/inequality systems, cone constraints,
equilibrium conditions, generalized equations, lower level optimality
in hierarchic optimization problems, and so on.
In order to define a parameterization of the solution set of
problem $({\mathcal E})$, one may consider the following problem
perturbation, which is defined via any set-valued
mapping $F:P\times X\rightrightarrows Y$, such that $F(\bar p,x)=
\Phi(x)$ for every $x\in X$:
$$
    \omega\in F(p,x).    \leqno  ({\mathcal E}_p)
$$
The solution mapping associated with $({\mathcal E}_p)$ is
therefore given by
$$
  {\mathcal R}(p)=F^{-1}(p,\cdot)(\omega)=
  \{x\in X:\ \omega\in F(p,x)\}.
$$
It is clear that an analytical expression of the (generally) set-valued
mapping ${\mathcal R}$ can be hardly derived from $({\mathcal E}_p)$
by direct computations, because of the severe difficulties in solving
explicitly each problem $({\mathcal E}_p)$. Therefore, it is convenient
to investigate the hemiregularity property of ${\mathcal R}$ via
an implicit multifunction theorem. Such a task is carried out in what
follows by a variational technique. To this aim, let us denote
by $\dispo{F}:P\times X\longrightarrow [0,+\infty]$ the following
functional quantifying the the displacement of $F$ from $\omega$:
$$
    \disp{F}{p}{x}=\dist{\omega}{F(p,x)}.
$$
In view of a subsequent employment,
a first semicontinuity property of $\dispo{F}(\cdot,x)$ is stated
in the next technical lemma, that can be easily obtained as a
special case of \cite[Lemma 3.2]{Uder14}.

\begin{lemma}     \label{lem:lscdisp}
Let $F:P\rightrightarrows Y$ be a set-valued mapping between
metric spaces and let $\omega\in Y$.
If $F$ is Hausdorff u.s.c. at $\bar p\in\dom F$, i.e. for every
$\epsilon>0$ there exists $\delta_\epsilon>0$ such that
$$
   F(p)\subseteq\ball{F(\bar p)}{\epsilon},\quad\forall p\in
   \ball{\bar p}{\delta_\epsilon},
$$
then the function $p\mapsto\dist{\omega}{F(p)}$ is l.s.c. at $\bar p$.
\end{lemma}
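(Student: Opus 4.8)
The plan is to establish lower semicontinuity of $g(p)=\dist{\omega}{F(p)}$ at $\bar p$ directly from the definition, by producing, for each $\epsilon>0$, a neighbourhood of $\bar p$ on which $g$ stays above $g(\bar p)-\epsilon$. The whole argument rests on one elementary distance estimate: for any nonempty sets $A,B\subseteq Y$ and any $\omega\in Y$,
$$
  \dist{\omega}{A}\le\dist{\omega}{B}+\exc{B}{A},
$$
where $\exc{B}{A}=\sup_{b\in B}\dist{b}{A}$ denotes the excess of $B$ over $A$. I would derive this from the triangle inequality: for arbitrary $b\in B$ and $a\in A$ one has $d(\omega,a)\le d(\omega,b)+d(b,a)$; taking the infimum over $a\in A$ gives $\dist{\omega}{A}\le d(\omega,b)+\exc{B}{A}$, and then the infimum over $b\in B$ yields the claim.

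Next I would feed the Hausdorff upper semicontinuity into this estimate. Fixing $\epsilon>0$, the hypothesis supplies $\delta_\epsilon>0$ with $F(p)\subseteq\ball{F(\bar p)}{\epsilon}$ for every $p\in\ball{\bar p}{\delta_\epsilon}$; by the very definition of the enlargement this says $\dist{y}{F(\bar p)}\le\epsilon$ for each $y\in F(p)$, i.e. $\exc{F(p)}{F(\bar p)}\le\epsilon$. Applying the distance estimate with $A=F(\bar p)$ (nonempty, since $\bar p\in\dom F$) and $B=F(p)$ gives
$$
  \dist{\omega}{F(\bar p)}\le\dist{\omega}{F(p)}+\epsilon,\qquad\hbox{hence}\qquad g(p)\ge g(\bar p)-\epsilon,
$$
for all $p\in\ball{\bar p}{\delta_\epsilon}$. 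Since $\epsilon>0$ was arbitrary, this yields $\liminf_{p\to\bar p}g(p)\ge g(\bar p)$, which is exactly lower semicontinuity of $g$ at $\bar p$.

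The single point requiring care, rather than a genuine obstacle, is the possibility that $F(p)=\varnothing$ for some $p$ near $\bar p$. By the convention $\dist{\omega}{\varnothing}=+\infty$ one then has $g(p)=+\infty$, so the desired inequality $g(p)\ge g(\bar p)-\epsilon$ holds trivially and the estimate above need only be invoked when $F(p)\ne\varnothing$. Beyond this, the delicate aspect is simply keeping the direction of the excess correct: Hausdorff u.s.c. controls $\exc{F(p)}{F(\bar p)}$, and it is precisely this one-sided control that bounds $g(\bar p)$ from above in terms of $g(p)$ and thereby produces the lower bound on $g(p)$. The statement also follows by specializing \cite[Lemma 3.2]{Uder14}, but the self-contained argument above is shorter.
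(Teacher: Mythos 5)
Your proof is correct in every step. The paper itself contains no written argument for this lemma: it is dispatched with the remark that it ``can be easily obtained as a special case of \cite[Lemma 3.2]{Uder14}'', so your self-contained derivation is a genuinely different (and more transparent) route. The key estimate you isolate,
$$
   \dist{\omega}{A}\le\dist{\omega}{B}+\exc{B}{A}
$$
for nonempty $A,B\subseteq Y$, is exactly the right mechanism: the triangle-inequality derivation (infimum over $a\in A$ first, then over $b\in B$) is sound, and you correctly translate the hypothesis $F(p)\subseteq\ball{F(\bar p)}{\epsilon}$ into $\exc{F(p)}{F(\bar p)}\le\epsilon$, which is the one-sided control that bounds $g(\bar p)$ from above by $g(p)+\epsilon$ --- the direction of the excess is the only place such a proof can go wrong, and you got it right. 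You also handle the two degenerate cases properly: $F(p)=\varnothing$ is covered by the convention $\dist{\omega}{\varnothing}=+\infty$ (making the inequality trivial), and $A=F(\bar p)\ne\varnothing$ is guaranteed by $\bar p\in\dom F$, which also makes $g(\bar p)$ finite so the conclusion $\liminf_{p\to\bar p}g(p)\ge g(\bar p)$ is meaningful. What the citation route buys the paper is brevity and access to a more general Lipschitz-type estimate for displacement functions established in \cite{Uder14}; what your argument buys is self-containedness and a clear exhibition of precisely which half of Hausdorff continuity (upper, not lower) drives the lower semicontinuity of $p\mapsto\dist{\omega}{F(p)}$ --- a point the paper's one-line reference leaves implicit.
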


\begin{remark}      \label{rem:uscHusc}
In the sequel, the fact will be exploited that the thesis of
Lemma \ref{lem:lscdisp} is true a fortiori if is $F$ is u.s.c..
Indeed, the (merely topological) notion of upper semicontinuity
at a point implies Hausdorff upper semicontinuity at the same point.
\end{remark}

For the purposes of the present analysis, the continuity
properties of the function $\dispo{F}$ are not enough.
Derivative-like tools, that enable one to formulate
conditions generalizing the nonsingularity requirement
in the classical implicit function theorem, are actually
needed. In a purely metric space setting, such tools are
mainly based on the notion of strong slope (see \cite{DeMaTo80}).
More precisely, a more robust variant of it, called strict
outer slope, will be employed here in connection with the
displacement function, which is defined as follows:

\begin{eqnarray*}
   \sostslp{\dispo{F}}(\bar p)=\lim_{\epsilon\to 0^+}\inf\
   \{\stslp{\dispo{F}}(p,x):\ p\in\ball{\bar p}{\epsilon},\
   x\in \ball{\bar x}{\epsilon}, \\
      \disp{F}{\bar p}{\bar x}<\disp{F}{p}{ x}<
      \disp{F}{\bar p}{\bar x}+\epsilon\},
\end{eqnarray*}
where
$$
    \stslp{\dispo{F}}(p,x)=\left\{ \begin{array}{ll}
                       0, & \text{if $p$ is a local minimizer} \\
                           & \text{to $\dispo{F}(\cdot,x)$},  \\
                       \displaystyle\limsup_{q\to p}
                       \frac{\dispo{F}(p,x)-\dispo{F}(q,x)}{d(q,p)}, &
                       \text{otherwise,}
                 \end{array}
   \right.
$$
is the partial strong slope of function $\dispo{F}$ with respect to
the variable $p$, calculated at $(p,x)\in P\times X$.
For more details on this slope as well as on other variations
on this theme, the reader is refereed, for instance, to
\cite{FaHeKrOu10}.

Now, all the needed elements having been introduced, the main result
of the paper can be formulated.

\begin{theorem}    \label{thm:imthreg}
Let $F:P\times X\rightrightarrows Y$ be a set-valued mapping
defining a problem perturbation $({\mathcal E}_p)$, with solution
mapping ${\mathcal R}:P\rightrightarrows X$. Given $\bar p\in P$,
let $\bar x\in {\mathcal R}(\bar p)$. Suppose that:

\begin{enumerate}

\item[(i)] $(P,d)$ is metrically complete;

\item[(ii)] there exists $\delta_0>0$ such that $F(\cdot,x):P
\rightrightarrows Y$ is Hausdorff u.s.c. on $\ball{\bar p}{\delta_0}$,
for every $x\in\ball{\bar x}{\delta_0}$;

\item[(iii)] $F(\bar p,\cdot):X\rightrightarrows Y$ is uniformly
Lipschitz l.s.c. at $(\bar x,\omega)$;

\item[(iv)] $\sostslp{\dispo{F}}(\bar p)>0$.

\end{enumerate}
Then, ${\mathcal R}$ is uniformly hemiregular at $(\bar p,\bar x)$ and
the following estimate holds
\begin{equation}    \label{in:uhregmodest}
    \usreg{\mathcal R}{(\bar p,\bar x)}\le {\uLlsc{F}{(\bar x,\omega)}
    \over \sostslp{\dispo{F}}(\bar p)}.
\end{equation}
\end{theorem}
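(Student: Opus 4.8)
The plan is to prove the reformulation $(\ref{in:altusregdef})$ of uniform hemiregularity, which by Remark \ref{rem:equivrefuhemic} is exactly what is needed and already encodes the modulus (equivalently, by Proposition \ref{pro:Liplscuhreg}, that $\mathcal R^{-1}$ is uniformly Lipschitz l.s.c. at $(\bar x,\bar p)$). Fixing $x$ near $\bar x$ and regarding the displacement as a function $g_x(p)=\disp{F}{p}{x}$ of the parameter alone, two elementary facts orient the whole argument: since $F$ is closed-valued, $\mathcal R^{-1}(x)=\{p:\ g_x(p)=0\}$, so producing a zero of $g_x$ near $\bar p$ is precisely producing a nearby point of $\mathcal R^{-1}(x)$; and since $\bar x\in\mathcal R(\bar p)$ one has $\disp{F}{\bar p}{\bar x}=0$, so hypothesis (iv) reads as follows: for every $\sigma'<\sigma:=\sostslp{\dispo F}(\bar p)$ there is $\epsilon_0>0$ with $\stslp{\dispo F}(p,x)\ge\sigma'$ whenever $p\in\ball{\bar p}{\epsilon_0}$, $x\in\ball{\bar x}{\epsilon_0}$ and $0<\disp{F}{p}{x}<\epsilon_0$.

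First I would use (iii) to bound the starting value $g_x(\bar p)$. Applying $(\ref{def:uLiplsc})$ to $\Phi=F(\bar p,\cdot)$ at $(\bar x,\omega)$ and noting that $F(\bar p,\cdot)^{-1}(\omega)=\mathcal R(\bar p)$, for any $l>\uLlsc{F}{(\bar x,\omega)}$ there is $\delta_1>0$ such that every $x\in\ball{\bar x}{\delta_1}$ carries some $y\in F(\bar p,x)$ with $d(\omega,y)\le l\,\dist{x}{\mathcal R(\bar p)}$; hence $g_x(\bar p)\le l\,\dist{x}{\mathcal R(\bar p)}$. Next, by (ii) together with Lemma \ref{lem:lscdisp} and Remark \ref{rem:uscHusc}, the map $g_x$ is l.s.c. on $\ball{\bar p}{\delta_0}$, so it is admissible for Ekeland's variational principle on the complete closed ball $\ball{\bar p}{\delta_0}$.

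The heart of the proof is a descent step via Ekeland. Fix $l$ as above and scalars $0<\sigma''<\sigma'<\sigma$, and take $x\in\ball{\bar x}{\delta}$ with $\delta\le\min\{\delta_0,\delta_1,\epsilon_0\}$ to be shrunk below; writing $t=\dist{x}{\mathcal R(\bar p)}$, the cases $t=0$ and $g_x(\bar p)=0$ are trivial, so assume $\epsilon:=g_x(\bar p)\in(0,lt]$. Ekeland's principle applied to $g_x\ge0$ with $\epsilon$-minimizer $\bar p$ and radius $\lambda:=\epsilon/\sigma''$ yields $p_x$ with $g_x(p_x)\le g_x(\bar p)$, $d(p_x,\bar p)\le\lambda\le lt/\sigma''$, and $g_x(p_x)\le g_x(q)+\sigma''\,d(q,p_x)$ for all $q$. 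Shrinking $\delta$ so that $l\delta/\sigma''\le\epsilon_0$ and $l\delta<\epsilon_0$ forces $p_x$ into the interior of $\ball{\bar p}{\epsilon_0}$ and keeps $g_x(p_x)<\epsilon_0$, placing $(p_x,x)$ in the region where (iv) applies. I then claim $g_x(p_x)=0$: were it positive, then $0<g_x(p_x)<\epsilon_0$ would give $\stslp{\dispo F}(p_x,x)\ge\sigma'$, so $p_x$ is not a local minimizer of $g_x$ and the Ekeland inequality forces $\stslp{\dispo F}(p_x,x)=\limsup_{q\to p_x}\frac{g_x(p_x)-g_x(q)}{d(q,p_x)}\le\sigma''<\sigma'$, a contradiction. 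Hence $p_x\in\mathcal R^{-1}(x)$ and $\dist{\bar p}{\mathcal R^{-1}(x)}\le d(\bar p,p_x)\le(l/\sigma'')\,\dist{x}{\mathcal R(\bar p)}$, which is $(\ref{in:altusregdef})$ with $\kappa=l/\sigma''$; letting $l\downarrow\uLlsc{F}{(\bar x,\omega)}$ and $\sigma''\uparrow\sigma$ delivers the estimate $(\ref{in:uhregmodest})$.

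The delicate points are concentrated in the third step. The main obstacle is the localization: one must choose $\delta$ small enough that the Ekeland point $p_x$, whose distance to $\bar p$ is controlled only through $\lambda=\epsilon/\sigma''$, still lands inside $\ball{\bar p}{\epsilon_0}$ on which (iv) supplies the slope lower bound, and one must run Ekeland on the closed ball $\ball{\bar p}{\delta_0}$ (complete, and where $g_x$ is l.s.c.) rather than on all of $P$, checking that $p_x$ is interior so that the strong-slope $\limsup$ is genuinely computed there. A second, subtler point is the reconciliation of the slope convention with (iv): since $\stslp{\dispo F}$ is set to $0$ at local minimizers, the bound $\ge\sigma'>0$ coming from (iv) is exactly what excludes $p_x$ being a local minimizer of $g_x$ with positive value, allowing the Ekeland estimate on the $\limsup$ to close the contradiction. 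Everything else is constant bookkeeping to make the ratio $l/\sigma''$ converge to the sharp value in $(\ref{in:uhregmodest})$.
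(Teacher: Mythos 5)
Your proposal is correct and follows essentially the same route as the paper's own proof: the reformulation of uniform hemiregularity via Remark \ref{rem:equivrefuhemic} (equivalently Proposition \ref{pro:Liplscuhreg}), the bound $\disp{F}{\bar p}{x}\le l\,\dist{x}{{\mathcal R}(\bar p)}$ extracted from hypothesis (iii), Ekeland's variational principle applied to $\disp{F}{\cdot}{x}$ on a closed ball that is complete by (i) and on which the displacement is l.s.c.\ by (ii) together with Lemma \ref{lem:lscdisp}, and a contradiction with the slope bound from (iv) forcing the Ekeland point to be a zero of the displacement, hence an element of ${\mathcal R}^{-1}(x)$ within distance $(l/\sigma'')\dist{x}{{\mathcal R}(\bar p)}$ of $\bar p$. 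The only differences are cosmetic bookkeeping --- you contradict (iv) by bounding the strong slope at the Ekeland point directly by $\sigma''$, while the paper extracts a descent point $p_\eta$ and contradicts the Ekeland inequality, and your localization constants ($\delta$ shrunk so that $l\delta/\sigma''<\epsilon_0$) play the role of the paper's $r_*$ and $r<\alpha r_*/(3\kappa)$ --- so no changes are needed.
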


\begin{proof}
According to hypothesis (iv), it is possible to pick a constant
$\alpha$ such that
\begin{eqnarray}     \label{in:sostslposuse}
    0<\alpha<\sostslp{\dispo{F}}(\bar p).
\end{eqnarray}
As established in Proposition \ref{pro:Liplscuhreg}, hypothesis
(iii) is equivalent to suppose the mapping $F^{-1}(\bar p,\cdot)
:Y\rightrightarrows X$ to be uniformly hemiregular at $(\omega,
\bar x)$. Since the related moduli coincide,
this means that, corresponding to any $\kappa>\uLlsc{F}
{(\bar x,\omega)}$, there exists $r_\kappa>0$ such that
\begin{eqnarray}    \label{in:usregF}
  \qquad\dist{\omega}{F(\bar p,x)}\le\kappa\dist{x}{F^{-1}(\bar p,\cdot)
  (\omega)},\ \forall x\in\ball{\bar x}{r_\kappa},
\end{eqnarray}
where it is to be recalled that $F^{-1}(\bar p,\cdot)(\omega)={\mathcal R}(\bar p)$.
Define $\tilde\delta=\min\{\delta_0,r_\kappa\}$. Observe that inequality
$(\ref{in:sostslposuse})$ means that, corresponding to $\alpha$,
it is possible to find $\delta_*\in(0,\tilde\delta)$ such that
\begin{eqnarray}    \label{in:stsldispgta}
    \stslp{\dispo{F}}(p,x)>\alpha,
\end{eqnarray}
$$
   \forall p\in\ball{\bar p}{\delta_*},\
   \forall x\in\ball{\bar x}{\delta_*},\
   \hbox{ with } 0=\disp{F}{\bar p}{\bar x}<\disp{F}{p}{x}<\delta_*.
$$
In turn, the inequality $(\ref{in:stsldispgta})$ implies that, whenever
$(p,x)\in\ball{\bar p}{\delta_*}\times\ball{\bar x}{\delta_*}$,
with $0<\disp{F}{p}{x}<\delta_*$, then for every $\eta>0$ there exists
$p_\eta\in\ball{p}{\eta}$ such that
\begin{eqnarray}      \label{in:etasstsl}
    \disp{F}{p}{x}>\disp{F}{p_\eta}{x}+\alpha d(p_\eta,p).
\end{eqnarray}
Now, choose a positive real $r_*$ satisfying the following
condition
$$
    r_*<\min\left\{{\delta_*\over 2},{\delta_*\over \kappa}\right\},
$$
and fix an arbitrary $x\in\ball{\bar x}{r}\backslash
{\mathcal R}(\bar p)$, with
\begin{eqnarray}      \label{in:rusregR}
    0<r<\min\left\{r_*,{\alpha r_*\over 3\kappa}
    \right\}.
\end{eqnarray}
Let us consider the function $\disp{F}{\cdot}{x}:
\ball{\bar p}{r_*}\longrightarrow [0,+\infty]$. It is obviously
bounded from below and, since $r<r_*<\delta_*<\tilde\delta\le\delta_0$,
then, by virtue of hypothesis (ii) and Lemma \ref{lem:lscdisp},
function $\disp{F}{\cdot}{x}$ is l.s.c. on $\ball{\bar p}{r_*}$.
Owing to hypothesis (i), $\ball{\bar p}{r_*}$ turns out to be a
complete metric space. Furthermore, notice that, since
$r<r_*<\delta_*<\tilde\delta\le r_\kappa$ and hence $x\in\ball{\bar x}
{r_\kappa}$, then according to $(\ref{in:usregF})$ it holds
\begin{eqnarray*}
   \disp{F}{\bar p}{x} \le \kappa\dist{x}{{\mathcal R}(\bar p)}\le
   \inf_{p\in\ball{\bar p}{r_*}}\disp{F}{p}{x}
  +\kappa\dist{x}{{\mathcal R}(\bar p)}.
\end{eqnarray*}
By applying the Ekeland's variational principle, one obtains
the existence of an element $p_0\in \ball{\bar p}{r_*}$ such that
\begin{eqnarray}
   \disp{F}{p_0}{x}\le \disp{F}{\bar p}{x};
\end{eqnarray}
\begin{eqnarray}     \label{in:EVP2}
   d(p_0,\bar p)\le {\kappa\dist{x}{{\mathcal R}(\bar p)}
    \over\alpha};
\end{eqnarray}
\begin{eqnarray}     \label{in:EVP3}
   \disp{F}{p_0}{x}< \disp{F}{p}{x}+\alpha d(p,p_0),
   \quad\forall p\in\ball{\bar p}{r_*}\backslash\{p_0\}.
\end{eqnarray}
Let us show that the last inequalities entail that
$$
    \disp{F}{p_0}{x}=0,
$$
so that, as $F$ takes closed values, $x\in{\mathcal R}(p_0)$.
Assume, ab absurdo, that $\disp{F}{p_0}{x}>0$. Since it is
$x\in\ball{\bar x}{r_*}$ and
$$
   \disp{F}{p_0}{x}\le\disp{F}{\bar p}{x}\le\kappa
   \dist{x}{{\mathcal R}(\bar p)}\le\kappa r_*<\delta_*,
$$
one has
$$
   p_0\in \ball{\bar p}{\delta_*} \quad\hbox{ and }\quad
   x\in\ball{\bar x}{\delta_*},\ \hbox{ with }
   0<\disp{F}{p_0}{x}<\delta_*.
$$
Thus, if taking $\eta=r_*/2$, according to inequality $(\ref{in:etasstsl})$,
an element $p_\eta$ must exist in $\ball{p_0}{r_*/2}$, with $p_\eta
\ne p_0$, such that
\begin{eqnarray}    \label{in:contrEVP3}
     \disp{F}{p_0}{x}>\disp{F}{p_\eta}{x}+\alpha d(p_\eta,p_0).
\end{eqnarray}
Observe that, by virtue of inequalities  $(\ref{in:EVP2})$ and
$(\ref{in:rusregR})$, it results in
$$
    d(p_0,\bar p)\le{\kappa\over\alpha}\cdot
    {\alpha r_*\over 3\kappa}<{r_*\over 2}.
$$
As a consequence, $p_\eta$ must belong to $\ball{\bar p}{r_*}\backslash
\{p_0\}$, because it holds
$$
   d(p_\eta,\bar p)\le d(p_\eta,p_0)+d(p_0,\bar p)<{r_*\over 2}+
    {r_*\over 2}.
$$
Therefore, inequality $(\ref{in:EVP3})$ is found to be evidently
contradicted by inequality $(\ref{in:contrEVP3})$.
From the fact that  $x\in{\mathcal R}(p_0)$, by recalling once again
inequality $(\ref{in:EVP2})$, one obtains that
$$
    \dist{\bar p}{{\mathcal R}^{-1}(x)}\le d(\bar p,p_0)\le
    {\kappa\over\alpha}\dist{x}{{\mathcal R}(\bar p)}.
$$
Since by arbitrariness of $x$ the last inequality remains true all over
$\ball{\bar x}{r}$, the set-valued mapping ${\mathcal R}$
is shown to be uniformly hemiregular at $\bar p$, with $\usreg{{\mathcal R}}
{\bar p}\le\kappa/\alpha$. Since $\alpha$ and $\kappa$ can be taken
arbitrarily closed to the value of $\sostslp{\dispo{F}}(\bar p)$ and
$\usreg{F^{-1}(\bar p,\cdot)}{\omega}$, respectively, then from the
last inequality it is possible to derive the estimate appearing in
the thesis. This completes the proof.
\end{proof}

As a comment to Theorem \ref{thm:imthreg}, it is to be noted that
its thesis combines solvability and sensitivity information,
according to the spirit of implicit function theorems. Indeed,
problems $({\mathcal E}_p)$ turn out to be solvable for every
$p$ in a neighbourhood of $\bar p$, as a direct consequence
of the hemiregularity of ${\mathcal R}$ at $(\bar p,\bar x)$.
The sensitivity part comes from the estimation of
$\usreg{\mathcal R}{(\bar p,\bar x)}$, which is fully expressed
in terms of problem data.

To the best of the author's knowledge, the only existing implicit
multifunction theorem involving hemiregularity is
\cite[Theorem 5.4]{AraMor11}.
A direct comparison of Theorem \ref{thm:imthreg} with this
result can not be accomplished for several reasons.
First, even if restated in a common setting
(\cite[Theorem 5.4]{AraMor11} is valid in Banach spaces),
they consider solution mappings associated
with different problems (an inclusion problem involving a
set-valued mapping versus an equation with a perturbing
term). Besides, assuming to consider a single-valued mapping $F$
in Theorem \ref{thm:imthreg} and a null perturbation term
$g\equiv\nullv$ in Theorem 5.4, the former considers uniform
hemiregularity, whereas the latter deals with a mere
hemiregularity with respect to one variable, which is
uniform with respect to the other variable.
Nevertheless, with all that, a common
pattern can be traced: Theorem \ref{thm:imthreg} assumes
the uniform Lipschitz lower semicontinuity with respect
to $x$ of the problem data to gain the uniform hemiregularity
of the solution mapping, while Theorem 5.4 assumes the hemiregularity
with respect to $x$ of the problem data to achieve the Lipschitz
lower semicontinuity \footnote{Actually, in the statement
of Theorem 5.4 this property is not mentioned, but is expressed
as hemiregularity of the inverse multifunction.}
of the solution mapping. The condition enabling this
phenomenon is the nondegeneracy of the strict outer slope
with respect to $p$ of the displacement functional in the
first case, which is replaced by a calmness condition
with respect to $p$ in Theorem 5.4.

To assess the impact of the above result on constrained
optimization, let us consider problems of the form
$$
    \min\varphi(x) \quad\hbox{ subject to }\quad
   x\in R=\Phi^{-1}(\omega),   \leqno ({\mathcal P}_{{\mathcal E}})
$$
that is with constraints in the abstract form $({\mathcal E})$.
The reader should notice that, even though inequality $(\ref{in:altusregdef})$
involves the set ${\mathcal R}(\bar p)$, which seems to require
the knowledge of the feasible region of $({\mathcal P}_{{\mathcal E}})$,
nonetheless Theorem \ref{thm:imthreg} can be effectively exploited for
achieving the exactness of penalty functions, if combined with
problem calmness, as stated next.
Below, by penalty function $\varphi_l:P\times X\longrightarrow
\R\cup\{\pm\infty\}$ associated with problem $({\mathcal P}_{{\mathcal E}})$,
the following functional is meant:
$$
  \varphi_l(p,x)=\varphi(x)+l\dispo{F(p,x)}.
$$
Observe that in order to evaluate $\varphi_l$ one needs only
the problem data.

\begin{corollary}    \label{cor:expenimth}
Let $F:P\times X\rightrightarrows Y$ be a perturbation of
$\Phi$ defining a parameterization of the feasible reagion
of problem $({\mathcal P}_{{\mathcal E}})$, let $\bar p\in
P$ such that $F(\bar p,\cdot)=\Phi$, and let $\bar x$ be a
local solution of $({\mathcal P}_{{\mathcal E}})$. Suppose
that

\begin{enumerate}

\item[(i)] $(P,d)$ is metrically complete;

\item[(ii)] there exists $\delta_0>0$ such that $F(\cdot,x):P
\rightrightarrows Y$ is Hausdorff u.s.c. on $\ball{\bar p}{\delta_0}$,
for every $x\in\ball{\bar x}{\delta_0}$;

\item[(iii)] $F(\bar p,\cdot):X\rightrightarrows Y$ is uniformly
Lipschitz l.s.c. at $(\bar x,\omega)$;

\item[(iv)] $\sostslp{\dispo{F}}(\bar p)>0$;

\item[(v)] $({\mathcal P}_{{\mathcal E}})$ is calm at $\bar x$
with respect to the parameterization ${\mathcal R}$ defined by $F$.

\end{enumerate}
Then, for every
\begin{equation}   \label{in:expenimtcost}
  l>{\uLlsc{F}{(\bar x,\omega)}\cdot
   \pcalm{{\mathcal P}_{{\mathcal E}}}{\mathcal R}{\bar x}
    \over \sostslp{\dispo{F}}(\bar p)}
\end{equation}
function $\varphi_l(\bar p,\cdot)$ is exact at $\bar x$.
\end{corollary}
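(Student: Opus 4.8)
The plan is to realize the statement as the confluence of the two principal results already at our disposal: Theorem~\ref{thm:imthreg} supplies the quantitative stability of the solution mapping, and Theorem~\ref{thm:exctpen} converts that stability, in the presence of problem calmness, into exactness of the penalty. First I would observe that hypotheses (i)--(iv) of the corollary are verbatim those of Theorem~\ref{thm:imthreg}; applying that theorem to the perturbation $F$ yields that the solution mapping $\mathcal R$ is uniformly hemiregular at $(\bar p,\bar x)$, with the estimate $\usreg{\mathcal R}{(\bar p,\bar x)}\le \uLlsc{F}{(\bar x,\omega)}/\sostslp{\dispo{F}}(\bar p)$. This secures the first hypothesis of Theorem~\ref{thm:exctpen}, while hypothesis (v) is precisely its second hypothesis, the calmness of $(\mathcal P_{\mathcal E})$ at $\bar x$ with respect to $\mathcal R$. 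In spirit we are then in position to conclude exactness for every $l>\usreg{\mathcal R}{\bar p}\cdot\pcalm{\mathcal P_{\mathcal E}}{\mathcal R}{\bar x}$, and majorizing $\usreg{\mathcal R}{\bar p}$ by the modulus estimate gives a threshold of the form $(\ref{in:expenimtcost})$.

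The step that demands genuine care — and which is the real content behind this formal chain — is that the two statements speak of \emph{two different penalties}: Theorem~\ref{thm:exctpen} is phrased for the geometric penalty $\varphi(\cdot)+l\dist{\cdot}{\mathcal R(\bar p)}$, whereas the corollary uses the displacement penalty $\varphi(\cdot)+l\dispo{F(\bar p,\cdot)}$, whose merit is that it is computable from the problem data alone. Rather than invoke Theorem~\ref{thm:exctpen} as a black box, I would therefore re-run its contradiction argument directly with $\dispo{F(\bar p,\cdot)}$ in place of $\dist{\cdot}{\mathcal R(\bar p)}$. Arguing by contradiction, non-exactness produces a sequence $x_n\to\bar x$ with $\varphi(x_n)+l\dispo{F(\bar p,x_n)}<\varphi(\bar x)$; since $\bar x$ is a local solution and $\mathcal R(\bar p)=R$ is closed, eventually $x_n\notin R$, whence $\dispo{F(\bar p,x_n)}>0$.

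For each such $x_n$ the crux is to manufacture a parameter $p_n$ with $\omega\in F(p_n,x_n)$, i.e. $x_n\in\mathcal R(p_n)$, whose distance from $\bar p$ is controlled by the displacement itself. This is exactly what the Ekeland construction inside the proof of Theorem~\ref{thm:imthreg} delivers, once applied to $\dispo{F(\cdot,x_n)}$ on a small complete ball about $\bar p$ with descent rate any $\alpha<\sostslp{\dispo{F}}(\bar p)$: it returns $p_n$ solving $x_n\in\mathcal R(p_n)$ and satisfying $d(p_n,\bar p)\le \dispo{F(\bar p,x_n)}/\alpha$, hypothesis (iii) serving to keep $\dispo{F(\bar p,x_n)}$ small enough that $p_n$ stays in the region where the strict-outer-slope estimate is in force. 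Substituting $p_n$ into the calmness inequality of Definition~\ref{def:pcalm} and comparing with the strict descent forced by non-exactness then yields the contradiction once $l$ exceeds the threshold. The main obstacle I anticipate is thus not a single inequality but the simultaneous bookkeeping of the three constants — $\kappa>\uLlsc{F}{(\bar x,\omega)}$, $\zeta>\pcalm{\mathcal P_{\mathcal E}}{\mathcal R}{\bar x}$ and $\alpha<\sostslp{\dispo{F}}(\bar p)$ — together with the verification that the several smallness requirements on $x_n$ and $p_n$ can be met at once, so that their combination is genuinely dominated by $l$.
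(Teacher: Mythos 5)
Your opening paragraph is precisely the paper's own proof: the author applies Theorem \ref{thm:imthreg} to obtain uniform hemiregularity of ${\mathcal R}$ with the estimate $(\ref{in:uhregmodest})$, then cites Theorem \ref{thm:exctpen} and reads off the threshold $(\ref{in:expenimtcost})$ --- nothing more is said. Your second paragraph, however, puts its finger on a point that this two-line composition passes over in silence: Theorem \ref{thm:exctpen} asserts exactness of the \emph{distance} penalty $\varphi+l\dist{\cdot}{{\mathcal R}(\bar p)}$, while the corollary concerns the \emph{displacement} penalty $\varphi_l(\bar p,\cdot)=\varphi+l\disp{F}{\bar p}{\cdot}$, and the only bridge supplied by the hypotheses, namely (iii), yields $\disp{F}{\bar p}{x}\le\kappa\dist{x}{{\mathcal R}(\bar p)}$ for any $\kappa>\uLlsc{F}{(\bar x,\omega)}$ --- an inequality in the \emph{wrong} direction for transferring exactness (one would need an error bound $\dist{x}{{\mathcal R}(\bar p)}\le c\,\disp{F}{\bar p}{x}$, i.e. metric subregularity of $\Phi$ at $(\bar x,\omega)$, which is not assumed). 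Your repair is sound: re-run the contradiction scheme of Theorem \ref{thm:exctpen} with the displacement in place of the distance, and manufacture $p_n$ with $x_n\in{\mathcal R}(p_n)$ and $d(p_n,\bar p)\le\disp{F}{\bar p}{x_n}/\alpha$ by the Ekeland step of Theorem \ref{thm:imthreg}, applied to $\disp{F}{\cdot}{x_n}$ with $\epsilon=\disp{F}{\bar p}{x_n}$ and radius $\epsilon/\alpha$; hypothesis (iii) indeed serves exactly as you say, since $\disp{F}{\bar p}{x_n}\le\kappa\, d(x_n,\bar x)\to 0$ keeps all localizations in force, and problem calmness then closes the contradiction.

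One correction, though, to your final bookkeeping. Non-exactness gives $\varphi(x_n)<\varphi(\bar x)-l\disp{F}{\bar p}{x_n}$, while calmness combined with your Ekeland bound gives $\varphi(x_n)\ge\varphi(\bar x)-\zeta d(p_n,\bar p)\ge\varphi(\bar x)-(\zeta/\alpha)\disp{F}{\bar p}{x_n}$; the contradiction therefore arises for $l\ge\zeta/\alpha$, so your argument proves exactness for every $l>\pcalm{{\mathcal P}_{{\mathcal E}}}{\mathcal R}{\bar x}/\sostslp{\dispo{F}}(\bar p)$. The factor $\uLlsc{F}{(\bar x,\omega)}$ disappears, because $d(p_n,\bar p)$ is controlled by $\disp{F}{\bar p}{x_n}$ itself rather than via $\kappa\dist{x_n}{{\mathcal R}(\bar p)}$. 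This covers the stated range $(\ref{in:expenimtcost})$ only when $\uLlsc{F}{(\bar x,\omega)}\ge 1$; when that modulus is smaller than $1$ the stated threshold is strictly below what your argument --- or, apparently, any argument --- can reach. Take $P=X=Y=\R$, $\omega=0$, $F(p,x)=\{\theta x-p\}$ with $0<\theta<1$, $\varphi(x)=-c|x|$, $\bar p=\bar x=0$: then $\uLlsc{F}{(0,0)}=\theta$, the calmness modulus equals $c/\theta$, $\sostslp{\dispo{F}}(0)=1$, and $\varphi_l(0,x)=(l\theta-c)|x|$ is exact at $0$ iff $l\ge c/\theta$, whereas $(\ref{in:expenimtcost})$ demands only $l>c$. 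So you should state your conclusion with the threshold $\pcalm{{\mathcal P}_{{\mathcal E}}}{\mathcal R}{\bar x}/\sostslp{\dispo{F}}(\bar p)$ (or add the proviso $\uLlsc{F}{(\bar x,\omega)}\ge 1$), rather than asserting, as your closing sentence does, that the constants combine to give exactly $(\ref{in:expenimtcost})$. With that amendment your route is more careful than the paper's own proof and is, in substance, the argument the displacement-penalty statement actually requires.
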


\begin{proof}
It suffices to apply Theorem \ref{thm:imthreg} and Theorem
\ref{thm:exctpen}. The estimate $(\ref{in:expenimtcost})$
can be immediately obtained by inequality $(\ref{in:uhregmodest})$
and the condition on the penalty term appearing in the thesis of
Theorem \ref{thm:exctpen}.
\end{proof}

To guide a comparison of Theorem \ref{thm:imthreg} with other
similar implicit multifunction theorems of new generation, it
must be pointed out that, often, along with the local solvability
of the parameterized system ${(\mathcal E}_p)$, a local error bound
of the form
$$
  \dist{x}{{\mathcal R}(p)}\le\kappa\dist{\omega}{F(p,x)},
$$
is also established, with $\kappa>0$ and with $p$ varying around
$\bar p$, or $p=\bar p$ (let us mention here
\cite[Theorem 5.5.5]{BorZhu05},
which served as a paradigm for many epigones in the subsequent
literature). Such distance estimates, stemming from
the Lyusternik's theorem, are useful for deriving optimality
conditions for problems with Lipschitz objective functions.
Of course, they can be generalized obtaining H\"older type
estimates in order to treat problems with corresponding  H\"older
objective functions. In contrast to this, in Corollary \ref{cor:expenimth}
no assumption is made on the objective function of problem
$({\mathcal P}_{{\mathcal E}})$, apart problem calmness (hypothesis (v)),
which relates to both $\varphi$ and ${\mathcal R}$. Thus, the
present approach to implicit multifunction theorem reveals that
Lipschitz/H\"older assumptions on the objective function can be
dropped out at the price of introducing a suitable interplay between
the parameterization of the feasible region and the objective
function.

The rest of the current section is devoted to establish a version
of Theorem \ref{thm:imthreg} working in Banach spaces.
Such a setting, which is more structured than purely metric spaces,
enables one to reformulate the condition on the strict outer
slope of the displacement functional in terms of derivative-like
objects. Since the displacement functional is rarely expected to be
differentiable, this will be done by employing tools of nonsmooth analysis.
More precisely, the partial Fr\'echet coderivative of the set-valued
mapping $F$ will be used. In order to recall this generalized derivative
construction, some basic elements of the Fr\'echet subdifferential calculus and
the related geometry are needed. In what follows, whenever $(X,
\|\cdot\|)$ denotes a Banach space, its continuous dual and
the related unit ball are indicated by $X^*$ and $\B^*$,
respectively.
Given a function $\varphi:X\longrightarrow\R\cup\{\pm\infty\}$
defined on a Banach space and $\bar x\in\dom\varphi=\{x\in\X:\
|\varphi(x)|<+\infty\}$, the Fr\'echet (alias, regular) subdifferential
of $\varphi$ at $\bar x$ is defined by
$$
   \fsubdif\varphi(\bar x)=\left\{ x^*\in X^*:\ \liminf_{x\to\bar x}
   \frac{\varphi(x)-\varphi(\bar x)-\langle x^*,x-\bar x\rangle}
   {\|x-\bar x\|}\ge 0\right\}.
$$
Given a subset $S\subseteq X$ and $\bar x\in S$, the Fr\'echet (alias,
regular) normal cone of $S$ at $\bar x$ is defined by
$$
   \Normal(\bar x,S)=\left\{x^*\in X^*:\ \limsup_{S\atop
    \displaystyle x\to\bar x}\frac{\langle x^*,x-\bar x
    \rangle}{\|x-\bar x\|}\le 0\right\}.
$$
Notice that the two aforementioned notions are linked through the set indicator
function $\iota_S:X\longrightarrow\{0,+\infty\}$, in the sense that
$$
   \Normal(\bar x,S)=\fsubdif\iota_S(\bar x),
    \quad \bar x\in X.
$$
Given a set-valued mapping $\Phi:X\rightrightarrows Y$ between
Banach spaces and $(\bar x,\bar y)\in\grph\Phi$, the Fr\'echet
coderivative of $\Phi$ at $(\bar x,\bar y)$ is the set-valued
mapping $\Coder\Phi(\bar x,\bar y):Y^*\rightrightarrows X^*$
defined through the Fr\'echet normal cone to its
graph as follows
$$
  \Coder\Phi(\bar x,\bar y)(y^*)=\{x^*\in X^*:\
  (x^*,-y^*)\in\Normal((\bar x,\bar y),\grph\Phi)\},
  \quad y^*\in Y^*.
$$
The Fr\'echet subdifferential, the Fr\'echet normal cone
and the Fr\'echet coderivative are the basic pillars of the
nonsmooth calculus here employed.
It is well known that the natural environment where to
handle the aforementioned Fr\'echet constructions are Asplund
spaces. Recall that a Banach space $(X,\|\cdot\|)$ is said
to be Asplund if every continuous convex function defined on
a nonempty open convex subset $C$ of $X$ is Fr\'echet differentiable
on a dense $G_\delta$ subset of $C$. It has been proved that the
Asplund property for a Banach space can be characterized by the
fact that each of its separable subspaces admits a separable dual
(see \cite{BorZhu05,Mord06}). The class of Asplund spaces,
including all weakly compactly generated spaces and, hence, all
reflexive Banach spaces, is wide enough for many applications.
Moreover, every Banach space having Fr\'echet smooth bump
functions (in particular, every space admitting a Fr\'echet
smooth renorm) is Asplund.
One of reasons why Asplund spaces are the natural environment
for the Fr\'echet nonsmooth calculus deals with the fact
that the Asplund property can be also characterized in terms
of the validity of the following Lipschitz local approximate
Fr\'echet subdifferential (for short, fuzzy) sum rule.
Such a rule plays a key role in many circumstances arising
in optimization and variational analysis.

\begin{definition}      \label{def:fuzsumrul}
Let $(X,\|\cdot\|)$ be a Banach space. $X$ is said to satisfy
the {\it Fr\'echet fuzzy sum rule} if for any l.s.c. function
$\varphi_1:X\longrightarrow\R\cup\{+\infty\}$, any Lipschitz
function $\varphi_2:X\longrightarrow\R$, and any $\epsilon>0$,
whenever $\bar x\in X$ is a local minimizer of $\varphi_1+
\varphi_2$, there exist $x_i\in X$ and $x^*_i\in\fsubdif\varphi_i
(x_i)$, $i=1,\, 2$, such that
$$
 (x_i,\varphi_i(x_i))\in\ball{(\bar x,\varphi_i(\bar x))}{\epsilon},
 \quad i=1,\, 2,
$$
and
$$
   \|x^*_1+x^*_2\|<\epsilon.
$$
\end{definition}
Among the notable achievements of nonlinear functional analysis,
there is the understanding that a Banach space is Asplund iff
it satisfies the Fr\'echet fuzzy sum rule (see \cite{BorZhu05,Mord06}).
In other words, any Asplund space is $\fsubdif$-trustworthy in
the sense of \cite{Ioff00}.
The next lemma adapts \cite[Proposition 1]{Ioff00} to the
specific need of the present analysis.

\begin{lemma}       \label{lem:stslpfsubdif}
Let $F:W\rightrightarrows Y$ be a set-valued mapping between
Banach spaces and let $W\subseteq P\times X$ be an open set.
Suppose that:
\begin{enumerate}

\item[(i)] $(P,\|\cdot\|)$ is Asplund;

\item[(ii)] the set-valued mapping $F(\cdot,x)$ is
Hausdorff u.s.c. on $\Pi_P(W)=\{p\in P:\
\exists x\in X:\ (p,x)\in W\}$, for each $x\in\Pi_X(W)=\{x\in X:\
\exists p\in P:\ (p,x)\in W\}$.

\end{enumerate}
Then, it holds
$$
  \inf_{(p,x)\in W}\stslp{\dispo{F}}(p,x)\ge\inf\{
  \|p^*\|:\ p^*\in\fsubdif_p{\dispo{F}}(p,x),\
  (p,x)\in W\}.
$$
\end{lemma}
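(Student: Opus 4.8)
The goal is to bound the infimum of the partial strong slope $\stslp{\dispo{F}}(p,x)$ from below by the infimum of $\|p^*\|$ over Fr\'echet subgradients $p^*\in\fsubdif_p\dispo{F}(p,x)$, as $(p,x)$ ranges over $W$. The plan is to reduce the statement to a pointwise estimate: at a \emph{fixed} pair $(p,x)\in W$ where $p$ is not a local minimizer of $\dispo{F}(\cdot,x)$, I would show that for every $\epsilon>0$ there exists some nearby point $\tilde p$ and a subgradient $\tilde p^*\in\fsubdif_p\dispo{F}(\tilde p,x)$ with $\|\tilde p^*\|\le\stslp{\dispo{F}}(p,x)+\epsilon$, while $(\tilde p,x)$ still lies in the open set $W$. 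Passing to infima over $W$ on both sides then yields the claimed inequality, since the right-hand infimum can only decrease when we enlarge the set of admissible $(\tilde p,x)$. The points where $p$ is a local minimizer contribute slope $0$ and can be handled separately (there the Fermat rule gives $\nullv\in\fsubdif_p\dispo{F}(p,x)$, so the right-hand side is $\le 0$ anyway).

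The heart of the argument is the pointwise estimate, and this is exactly where \cite[Proposition 1]{Ioff00} enters. First I would fix $(p,x)\in W$ with slope $s:=\stslp{\dispo{F}}(p,x)<+\infty$ (otherwise there is nothing to prove), and fix a small $\epsilon>0$. By the definition of the strong slope as a $\limsup$ of difference quotients, the function $q\mapsto\dispo{F}(q,x)+(s+\epsilon)\,d(q,p)$ has a strict local minimum-like behaviour near $p$ when restricted to a small ball, which is the standard mechanism for converting a slope bound into a minimization problem. Concretely, on a sufficiently small ball the perturbed function $q\mapsto\dispo{F}(q,x)+(s+\epsilon)\|q-p\|$ attains a near-minimum at a point close to $p$; this is the point at which the variational (Ekeland-type) machinery and the Fr\'echet fuzzy sum rule are applied. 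Here hypothesis (ii), Hausdorff upper semicontinuity of $F(\cdot,x)$, is needed to guarantee via Lemma \ref{lem:lscdisp} that $\dispo{F}(\cdot,x)$ is lower semicontinuous in the $p$-variable, so that Ekeland's principle and the sum rule legitimately apply; hypothesis (i), that $P$ is Asplund, is precisely what makes $P$ satisfy the Fr\'echet fuzzy sum rule of Definition \ref{def:fuzsumrul}.

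With lower semicontinuity secured, I would split the perturbed objective as $\varphi_1+\varphi_2$ with $\varphi_1=\dispo{F}(\cdot,x)$ (l.s.c.) and $\varphi_2=(s+\epsilon)\|\cdot-p\|$ (Lipschitz), and invoke the fuzzy sum rule at the approximate minimizer. This produces a point $\tilde p$ near $p$ and subgradients $\tilde p^*\in\fsubdif_p\dispo{F}(\tilde p,x)$ and $w^*\in\fsubdif\varphi_2(\tilde p)$ with $\|\tilde p^*+w^*\|$ arbitrarily small. Since $\varphi_2$ is $(s+\epsilon)$-Lipschitz, its Fr\'echet subgradient satisfies $\|w^*\|\le s+\epsilon$, whence $\|\tilde p^*\|\le\|w^*\|+\|\tilde p^*+w^*\|\le s+2\epsilon$. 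Because $W$ is open and $\tilde p$ can be forced as close to $p$ as desired, the pair $(\tilde p,x)$ lies in $W$, so $\tilde p^*$ is an admissible competitor for the right-hand infimum. The estimate on the Lipschitz constant of $\varphi_2$ controlling $\|w^*\|$ is the quantitatively decisive step, and the main obstacle I anticipate is the bookkeeping needed to apply the fuzzy sum rule cleanly in the \emph{partial} (only $p$-variable) setting while keeping $x$ frozen and ensuring the constructed point stays inside the open set $W$; this is essentially a careful transcription of the localization already carried out in \cite{Ioff00}, and letting $\epsilon\to 0^+$ closes the argument.
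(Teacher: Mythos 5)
Your proposal is correct and matches the paper's proof in all essentials: lower semicontinuity of $\dispo{F}(\cdot,x)$ obtained from hypothesis (ii) via Lemma \ref{lem:lscdisp}, the perturbed function $q\mapsto\dispo{F}(q,x)+(s+\epsilon)\|q-p\|$, the Fr\'echet fuzzy sum rule made available by the Asplund property of $P$, the bound $s+\epsilon$ on the norm of the subgradient of the Lipschitz term, and the openness of $W$ to keep the resulting point admissible for the right-hand infimum. The only inessential differences are that your Ekeland detour is unnecessary --- the slope bound already makes the reference point an \emph{exact} local minimizer of the perturbed function, which is precisely where the paper applies the fuzzy sum rule directly --- and that the paper perturbs at a single near-infimal pair $(p_\epsilon,x_\epsilon)$ rather than arguing pointwise over all of $W$, an immaterial variation.
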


\begin{proof}
Observe that, by virtue of Lemma \ref{lem:lscdisp}, each function
$p\mapsto{\dispo{F}}(p,x)$, with $x\in\Pi_X(W)$, is l.s.c.on on $\Pi_P(W)$.
Set $\mu=\inf_{(p,x)\in W}\stslp{\dispo{F}}(p,x)$ and take an arbitrary
$\epsilon>0$. Corresponding to $\epsilon/2$, there exists
$(p_\epsilon,x_\epsilon)\in W$ such that $\stslp{\dispo{F}}
(p_\epsilon,x_\epsilon)<\mu+{\epsilon\over 2}$. According to the
definition of partial strong slope, the last inequality implies
the existence of $\delta>0$ such that
$$
  {\dispo{F}}(p,x_\epsilon)+\left(\mu+{\epsilon\over 2}\right)
  \|p-p_\epsilon\|\ge{\dispo{F}}(p_\epsilon,x_\epsilon),
  \quad\forall p\in\ball{p_\epsilon}{\delta}.
$$
Notice that, by well-known properties of the Fr\'echet subdifferential
(see, for instance, \cite{BorZhu05,Mord06,Schi07}), one has
$$
  \fsubdif\left(\mu+{\epsilon\over 2}\right)\|\cdot-p_\epsilon\|(p)
  \subseteq\left(\mu+{\epsilon\over 2}\right)\B^*,\quad\forall p\in P.
$$
Since the function $\dispo{F}(\cdot,x_\epsilon)+\left(\mu+{\epsilon\over 2}
\right)\|\cdot-p_\epsilon\|$, which is the sum of a l.s.c. and a
Lipschitz function, attains a local minimum at $p_\epsilon$, it is
possible to apply the Fr\'echet fuzzy sum rule in Definition \ref{def:fuzsumrul},
in force of hypothesis $(i)$. Accordingly, taken $\eta\in (0,\epsilon/2)$
in such a way that $\ball{p_\epsilon}{\eta}\times\{x_\epsilon\}\in W$,
one gets consequent $(p_i,x_\epsilon)$ and $p^*_i\in P^*$, $i=1,\, 2$,
such that
$$
   \|p_i-p_\epsilon\|<\eta,\qquad i=1,\, 2.
$$
$$
  p^*_1\in\fsubdif_p\dispo{F}(p_1,x_\epsilon),\qquad\quad
  p^*_2\in\fsubdif\left(\mu+{\epsilon\over 2}\right)\|\cdot-p_\epsilon\|
  (p_2),
$$
and
$$
   \|p^*_1+p^*_2\|<\eta.
$$
As it is $\eta<\epsilon/2$, one can deduce that $p^*_1\in(\mu+
\epsilon)\B^*$, and hence, since it is $(p_1,x_\epsilon)\in
\ball{p_\epsilon}{\eta}\times\{x_\epsilon\}\subseteq W$, one
obtains
$$
  \inf\{\|p^*\|:\ p^*\in\fsubdif_p{\dispo{F}}(p,x),\
  (p,x)\in W\}\le\mu+\epsilon.
$$
The thesis follows by arbitrariness of $\epsilon$.
\end{proof}

Now, for formulating the next technical lemma, some further notations
are needed. Given a set-valued mapping $F:P\times X\rightrightarrows
Y$ and $(p,x)\in P\times X$, let us set
\begin{eqnarray*}
  \sigma(p,x)=\lim_{\epsilon\to 0^+}\inf\{\|p^*\| &:&
  p^*\in\Coder F(\cdot,x)(p',y')(y^*),\ \|y^*\|=1,\\
  & & p'\in\ball{p}{\epsilon},\ y'\in Y: \|y'\|\le\dispo{F}
  (p',x)+\epsilon\},
\end{eqnarray*}
where $\Coder F(\cdot,x)(p',y'):Y^*\rightrightarrows P^*$ denotes
the Fr\'echet coderivative of the set-valued mapping $F(\cdot,
x):P\rightrightarrows Y$ (hence, the partial coderivative
of $F$ with respect to $p$), calculated at $(p',y')\in\grph
F(\cdot,x)$. Again, set
$$
   V_\eta=\inte[\ball{\bar p}{\eta}\times\ball{\bar x}{\eta}]
   \backslash F^{-1}(\nullv).
$$

\begin{remark}      \label{rem:uscopen}
Notice that, since $F:P\times X\rightrightarrows Y$ is closed
valued, one has
$$
  (P\times X)\backslash F^{-1}(\nullv)=\{(p,x)\in
   P\times X:\ \dispo{F}(p,x)>0\}.
$$
Therefore, whenever
$F$ is u.s.c. on a set $\inte[\ball{\bar p}{\delta_0}\times
\ball{\bar x}{\delta_0}]$, so that function $\dispo{F}:P\times
X\longrightarrow [0,+\infty]$ is
l.s.c. on the same set (remember Remark \ref{rem:uscHusc}),
each set $V_\eta$, with $\eta<\delta_0$ turns out to be open.
\end{remark}

\begin{lemma}     \label{lem:fsubdifcoder}
Let $F:W\rightrightarrows Y$ be a set-valued mapping between
Banach spaces and let $W\subseteq P\times X$ be an open set.
Suppose that:
\begin{enumerate}

\item[(i)] $(P,\|\cdot\|)$ and $(Y,\|\cdot\|)$ are Asplund;

\item[(ii)] there exists $\delta_0>0$ such that $F(\cdot,x)$ is
u.s.c. on $\ball{\bar p}{\delta_0}$, for each $x\in\ball{\bar x}{\delta_0}$;

\item[(iii)] it is $W\subseteq[\ball{\bar p}{\delta_0}\times
\ball{\bar x}{\delta_0}]\backslash F^{-1}(\nullv)$ and there exists
a constant $\sigma>0$ such that
$$
   \inf_{(p,x)\in W}\sigma(p,x)\ge\sigma.
$$
\end{enumerate}
Then, it holds
$$
   \inf\{\|p^*\|:\ p^*\in\fsubdif_p{\dispo{F}}(p,x),\
  (p,x)\in W\}\ge\sigma.
$$
\end{lemma}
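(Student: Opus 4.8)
The plan is to fix an arbitrary $(p_0,x_0)\in W$ and an arbitrary $p^*\in\fsubdif_p\dispo{F}(p_0,x_0)$, and to manufacture, for each small $\epsilon>0$, a graph point at which the partial coderivative of $F(\cdot,x_0)$ contains a functional whose norm is close to $\|p^*\|$ and which is admissible in the family defining $\sigma(p_0,x_0)$; passing to the limit $\epsilon\to0^+$ then yields $\|p^*\|\ge\sigma$. Write $G=F(\cdot,x_0):P\rightrightarrows Y$ and $d(p)=\disp{F}{p}{x_0}=\dist{\nullv}{G(p)}$. Since $W$ avoids $F^{-1}(\nullv)$ by hypothesis (iii), one has $d(p_0)>0$, so that near-minimizers of $\|\cdot\|$ over $G(p_0)$ stay away from $\nullv$; moreover, by hypothesis (ii) together with Lemma \ref{lem:lscdisp} and Remark \ref{rem:uscHusc}, $d$ is l.s.c. near $p_0$ and $\grph G$ is locally closed, so that the indicator $\iota_{\grph G}$ is l.s.c.

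First I would convert the subgradient inequality into an approximate minimization on $P\times Y$. The relation $p^*\in\fsubdif d(p_0)$ means that, for the fixed $\epsilon$, the function $p\mapsto d(p)-\langle p^*,p-p_0\rangle+\epsilon\|p-p_0\|$ has a local minimum at $p_0$ with value $d(p_0)$. Choosing $y_0\in G(p_0)$ with $\|y_0\|\le d(p_0)+\epsilon^2$ then exhibits $(p_0,y_0)$ as an $\epsilon^2$-approximate local minimizer, over the (locally complete) set $\grph G$, of $(p,y)\mapsto\|y\|-\langle p^*,p-p_0\rangle+\epsilon\|p-p_0\|$. Ekeland's variational principle, applied with radius $\epsilon$, yields an exact local minimizer $(\tilde p,\tilde y)\in\grph G$ within $\epsilon$ of $(p_0,y_0)$ of that functional augmented by the Lipschitz perturbation $\epsilon(\|p-\tilde p\|+\|y-\tilde y\|)$.

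Next I would invoke the Fr\'echet fuzzy sum rule of Definition \ref{def:fuzsumrul}, legitimate because $P\times Y$ is Asplund by hypothesis (i), to split the l.s.c. indicator $\iota_{\grph G}$ from the globally Lipschitz remainder. This produces graph points $(p_1,y_1)$ and $(p_2,y_2)$ near $(\tilde p,\tilde y)$, a normal vector $(u^*,v^*)\in\Normal((p_1,y_1),\grph G)$, and a Lipschitz subgradient whose $p$-component lies in $-p^*+2\epsilon\B^*$ and whose $y$-component lies in $\fsubdif\|\cdot\|(y_2)+\epsilon\B^*$, the two vectors summing to within $\epsilon$ in norm. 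Since $y_2$ is close to $y_0$ and $\|y_0\|\approx d(p_0)>0$, one has $y_2\ne\nullv$, so every element of $\fsubdif\|\cdot\|(y_2)$ has unit norm; tracking the cancellations then gives $\|u^*-p^*\|\le3\epsilon$ and $\bigl|\,\|v^*\|-1\,\bigr|\le2\epsilon$. By the definition of the partial coderivative, $(u^*,v^*)\in\Normal((p_1,y_1),\grph G)$ means $u^*\in\Coder G(p_1,y_1)(-v^*)$, and the cone property of the normal cone upgrades this to $u^*/\|v^*\|\in\Coder G(p_1,y_1)(-v^*/\|v^*\|)$, the dual direction $-v^*/\|v^*\|$ being of unit norm.

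Finally I would match this datum against the definition of $\sigma(p_0,x_0)$ at the \emph{fixed} base point. The point $y_1\in G(p_1)=F(p_1,x_0)$ is close to the distance-realizing $y_0$, and the l.s.c. of $d$ forces $d(p_0)\le d(p_1)+o(1)$; hence $\|y_1\|\le d(p_1)+\gamma(\epsilon)$ with $\gamma(\epsilon)\to0$, while $p_1\in\ball{p_0}{C\epsilon}$ for some constant $C$. Thus, with radius $\epsilon'=\max\{C\epsilon,\gamma(\epsilon)\}$, the functional $u^*/\|v^*\|$ is admissible in the infimum defining the $\epsilon'$-level of $\sigma(p_0,x_0)$, so $\|u^*\|/\|v^*\|$ dominates that infimum. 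Because $(p_0,x_0)\in W$, hypothesis (iii) gives $\sigma(p_0,x_0)\ge\sigma$, and since that infimum is nondecreasing as $\epsilon'\downarrow0$ with limit $\sigma(p_0,x_0)$, for $\epsilon$ small it exceeds $\sigma-\delta$ for any prescribed $\delta>0$. Combining $\|u^*\|/\|v^*\|\ge\sigma-\delta$ with $\|u^*-p^*\|\le3\epsilon$ and $\|v^*\|\le1+2\epsilon$ gives $\|p^*\|\ge(1-2\epsilon)(\sigma-\delta)-3\epsilon$; letting $\epsilon\to0^+$ and then $\delta\to0^+$ yields $\|p^*\|\ge\sigma$, and the arbitrariness of $(p_0,x_0)\in W$ and of $p^*$ finishes the proof. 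The delicate part, I expect, is precisely this last bookkeeping — coordinating the Ekeland radius, the fuzzy-rule tolerance and the slack $\epsilon^2$ in the choice of $y_0$ so that $(p_1,y_1)$ genuinely lands in the approximate-distance regime defining $\sigma(p_0,x_0)$ while $y_2$ stays bounded away from $\nullv$; the individual estimates are otherwise routine.
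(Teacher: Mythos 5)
Your proof is correct and is essentially the paper's own argument: the paper disposes of this lemma by citing Lemma 5.5.4 of Borwein--Zhu \cite{BorZhu05} with precisely the adaptations you implement --- the partial subdifferential/coderivative in $p$, and the Asplund property of $P\times Y$ invoked through the Fr\'echet fuzzy sum rule in place of Fr\'echet smoothness --- and your Ekeland-plus-fuzzy-sum-rule reconstruction (minimizing $\|y\|$ over $\grph F(\cdot,x_0)$, splitting off the graph indicator, and using that $\fsubdif\|\cdot\|(y_2)$ consists of unit functionals because $y_2\ne\nullv$) is exactly the proof of that cited lemma, unfolded. The only blemishes are cosmetic: the final estimate uses the lower bound $\|v^*\|\ge 1-2\epsilon$ (which you did establish via $\bigl|\,\|v^*\|-1\,\bigr|\le 2\epsilon$, though you quote the upper bound at that point), and the local closedness of $\grph F(\cdot,x_0)$ needed for Ekeland merits the one-line remark that openness of $W$ forces $p_0\in\inte\ball{\bar p}{\delta_0}$, where hypothesis (ii) supplies upper semicontinuity.
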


\begin{proof}
The thesis follows at once from \cite[Lemma 5.5.4]{BorZhu05}.
Indeed, it suffices to replace the Fr\'echet subdifferential
and coderivative with their partial counterparts and to observe
that, in order to apply the Fr\'echet fuzzy sum rule, the
hypothesis about the Fr\'echet smoothness assumed in
\cite[Lemma 5.5.4]{BorZhu05}
can be replaced with the Asplund property of $P$ and $Y$.
Recall that the Cartesian product of Asplund spaces is still
Asplund (see \cite{Mord06}).
\end{proof}

By means of the above constructions, it is possible to
establish the following coderivative condition for the uniform
hemiregularity of the multifunction implicitly defined by
a problem $({\mathcal E})$, in a Banach space setting.

\begin{theorem}
Lat $F:P\times X\rightrightarrows Y$ be a set-valued mapping
between Banach spaces defining a parameterization ${\mathcal R}:
P\rightrightarrows X$ for the solution set $R$ of a problem
$({\mathcal E})$. Given $\bar p\in P$, let $\bar x\in
{\mathcal R}(\bar p)$. Suppose that:
\begin{enumerate}

\item[(i)] $(P,\|\cdot\|)$ and $(Y,\|\cdot\|)$ are Asplund;

\item[(ii)] there exists $\delta_0>0$ such that $F$ is u.s.c.
on $\ball{\bar p}{\delta_0}\times\ball{\bar x}{\delta_0}$;

\item[(iii)] $F(\bar p,\cdot):X\rightrightarrows Y$ is uniformly
Lipschitz l.s.c. at $(\bar x,\nullv)$;

\item[(iv)] it is
\begin{eqnarray}   \label{in:fcodnondegcond}
  \sigma=\lim_{\eta\to 0^+}\inf_{(p,x)\in V_\eta}\sigma(p,x)>0.
\end{eqnarray}
\end{enumerate}
Then, ${\mathcal R}$ is uniformly hemiregular at $(\bar p,\bar x)$
and the following estimate holds
\begin{eqnarray*}
   \usreg{{\mathcal R}}{(\bar p,\bar x)}\le
   {\uLlsc{F}{(\bar x,\nullv)}\over\sigma}.
\end{eqnarray*}
\end{theorem}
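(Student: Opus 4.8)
The plan is to reduce this Banach-space coderivative statement to the purely metric Theorem~\ref{thm:imthreg} already established, by showing that the coderivative nondegeneracy condition (iv) implies the strict outer slope condition $\sostslp{\dispo{F}}(\bar p)>0$. The three hypotheses on completeness, Hausdorff upper semicontinuity, and uniform Lipschitz lower semicontinuity transfer essentially verbatim: every Banach space is complete, so (i) gives the metric completeness required by Theorem~\ref{thm:imthreg}(i); upper semicontinuity implies Hausdorff upper semicontinuity (Remark~\ref{rem:uscHusc}), so hypothesis (ii) here yields hypothesis (ii) of Theorem~\ref{thm:imthreg}; and hypothesis (iii) is identical in both statements, now taken at $(\bar x,\nullv)$ since the reference element of $Y$ is $\omega=\nullv$. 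Thus the whole task is to verify hypothesis (iv) of Theorem~\ref{thm:imthreg} and then quote it to obtain both the uniform hemiregularity and the modulus estimate.

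First I would chain the two technical lemmas together on a suitable shrinking family of open sets. For each small $\eta\in(0,\delta_0)$ the set $V_\eta=\inte[\ball{\bar p}{\eta}\times\ball{\bar x}{\eta}]\backslash F^{-1}(\nullv)$ is open (Remark~\ref{rem:uscopen}) and is contained in $[\ball{\bar p}{\delta_0}\times\ball{\bar x}{\delta_0}]\backslash F^{-1}(\nullv)$. Picking any $0<\sigma'<\sigma$, the definition of $\sigma$ in $(\ref{in:fcodnondegcond})$ provides some $\bar\eta$ with $\inf_{(p,x)\in V_\eta}\sigma(p,x)\ge\sigma'$ for all $\eta<\bar\eta$. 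On such a $V_\eta$, Lemma~\ref{lem:fsubdifcoder} applies (its hypotheses (i)--(iii) being met by $P,Y$ Asplund, $F$ u.s.c., and the coderivative bound just fixed), giving
\begin{eqnarray*}
   \inf\{\|p^*\|:\ p^*\in\fsubdif_p\dispo{F}(p,x),\ (p,x)\in V_\eta\}\ge\sigma'.
\end{eqnarray*}
Then Lemma~\ref{lem:stslpfsubdif}, whose hypotheses are again supplied by $P$ Asplund and $F(\cdot,x)$ Hausdorff u.s.c., yields $\inf_{(p,x)\in V_\eta}\stslp{\dispo{F}}(p,x)\ge\sigma'$.

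The remaining point is to match this subdifferential-slope estimate over $V_\eta$ with the definition of $\sostslp{\dispo{F}}(\bar p)$, whose infimum is taken over points $(p,x)$ with $p\in\ball{\bar p}{\epsilon}$, $x\in\ball{\bar x}{\epsilon}$ and $0=\disp{F}{\bar p}{\bar x}<\disp{F}{p}{x}<\epsilon$. The condition $\disp{F}{p}{x}>0$ says exactly $(p,x)\notin F^{-1}(\nullv)$, so for $\epsilon<\eta$ every admissible point of the strict-outer-slope infimum lies in $V_\eta$ (the strict upper bound $\disp{F}{p}{x}<\epsilon$ only shrinks the index set further). Consequently each inner infimum defining $\sostslp{\dispo{F}}(\bar p)$ is $\ge\sigma'$, whence $\sostslp{\dispo{F}}(\bar p)\ge\sigma'>0$. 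Letting $\sigma'\uparrow\sigma$ gives $\sostslp{\dispo{F}}(\bar p)\ge\sigma$, so hypothesis (iv) of Theorem~\ref{thm:imthreg} holds. Applying that theorem delivers the uniform hemiregularity together with $\usreg{{\mathcal R}}{(\bar p,\bar x)}\le\uLlsc{F}{(\bar x,\nullv)}/\sostslp{\dispo{F}}(\bar p)\le\uLlsc{F}{(\bar x,\nullv)}/\sigma$, which is the claimed estimate.

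The main obstacle I anticipate is bookkeeping the quantifiers so that the two ``$\lim_{\eta\to0^+}\inf$'' constructions, one over $V_\eta$ and one in the strict outer slope, genuinely align; in particular one must be careful that the reference displacement $\disp{F}{\bar p}{\bar x}=0$ (since $\bar x\in{\mathcal R}(\bar p)$ means $\nullv\in F(\bar p,\bar x)$) so the lower constraint $\disp{F}{\bar p}{\bar x}<\disp{F}{p}{x}$ reads simply $\disp{F}{p}{x}>0$, matching the removal of $F^{-1}(\nullv)$ in $V_\eta$. Beyond this alignment the argument is a clean composition of the two lemmas with the metric master theorem, with no further analysis required.
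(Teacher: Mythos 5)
Your proposal is correct and takes essentially the same route as the paper's own proof: reduce to Theorem~\ref{thm:imthreg} by checking its hypotheses (completeness from the Banach setting, Hausdorff u.s.c.\ via Remark~\ref{rem:uscHusc}), then chain Lemma~\ref{lem:fsubdifcoder} and Lemma~\ref{lem:stslpfsubdif} on $W=V_\eta$ and use the inclusion $\{(p,x)\in\ball{\bar p}{\epsilon}\times\ball{\bar x}{\epsilon}:\ 0<\dispo{F}(p,x)<\epsilon\}\subseteq V_\eta$ for small $\epsilon$ to bound the strict outer slope from below. Your $\sigma'\uparrow\sigma$ bookkeeping is just the paper's $\sigma-\zeta$ with $\zeta\to 0^+$ in different notation.
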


\begin{proof}
The proof clearly relies on the application of Theorem \ref{thm:imthreg}.
Let us check that all hypotheses of that theorem are actually fulfilled under
the current assumptions.

Hypothesis $(i)$ takes trivially place in a Banach space setting.
As to hypothesis $(ii)$, it suffices to recall Remark \ref{rem:uscHusc}.
It remains to show that condition $(\ref{in:fcodnondegcond})$
guarantees the validity of hypothesis $(iv)$. To this aim, let us
start with observing that, fixed an arbitrary $\zeta>0$, inequality
$(\ref{in:fcodnondegcond})$ implies that it is possible to find
$\eta\in (0,\delta_0/2)$ such that
$$
  \inf_{(p,x)\in V_{\eta}}\sigma(p,x)\ge\sigma-\zeta.
$$
Thus, by applying Lemma \ref{lem:fsubdifcoder} with $W=V_{\eta}$
(note that, under the current hypotheses, it is an open set
according to Remark \ref{rem:uscopen}), one finds
$$
   \inf\{\|p^*\|:\ p^*\in\fsubdif_p{\dispo{F}}(p,x),\
  (p,x)\in V_{\eta}\}\ge\sigma-\zeta.
$$
In turn, on account of Lemma \ref{lem:stslpfsubdif}, the last
inequality gives
$$
   \inf_{(p,x)\in V_{\eta}}\stslp{\dispo{F}}(p,x)\ge
   \sigma-\zeta.
$$
By recalling the definition of $\sostslp{\dispo{F}}(\bar p)$,
since for a proper $\epsilon>0$ it happens that
$\{(p,x)\in\ball{\bar p}{\epsilon}\times\ball{\bar x}{\epsilon}:\
0<\dispo{F}(p,x)<\epsilon\}\subseteq V_\eta$, one obtains
$$
   \sostslp{\dispo{F}}(\bar p)\ge\inf_{(p,x)\in V_{\eta}}
   \stslp{\dispo{F}}(p,x)\ge\sigma-\zeta.
$$
As $\zeta$ has been arbitrarily taken, the above inequality
completes the proof.
\end{proof}

\vskip1cm


\end{document}